\documentclass{amsart}
\usepackage{amsfonts,amsmath, comment, amssymb, rotating, color,  graphicx}
\setcounter{MaxMatrixCols}{30}
\newtheorem{theorem}{Theorem}
\theoremstyle{plain}

\newtheorem{corollary}{Corollary}
\newtheorem{definition}{Definition}

\newtheorem{lemma}{Lemma}

\newtheorem{remark}{Remark}
\numberwithin{equation}{section}

\newcommand{\I}{{\mathcal I}}

\begin{document}
\title[Graphs in Projective Planes]{Graphs Embedded into Finite Projective Planes}
\author{Keith Mellinger}
\address{Department of Mathematics\\
University of Mary Washington\\
Fredericksburg, VA  22401.}
\email{kmelling@umw.edu}
\author{Ryan Vaughn}
\address{Department of Mathematics\\
University of Mary Washington\\
Fredericksburg, VA  22401.}
\email{vaughn.ryan@gmail.com}
\author{Oscar Vega}
\address{Department of Mathematics\\
California State University, Fresno \\ Fresno, CA 93740.}
\email{ovega@csufresno.edu}
\date{}

\subjclass[2000]{Primary 05C10, 51E15}
\keywords{Graph embeddings, finite projective plane, subplane}

\begin{abstract}
We introduce and study embeddings of graphs in finite projective planes, and present related results for some families of graphs including complete graphs and complete bipartite graphs. We also make connections between embeddings of graphs and the existence of certain substructures in a plane, such as Baer subplanes and arcs.
\end{abstract}

\maketitle

\section{Introduction}

We are interested in studying how graphs may be embedded in finite planes. These embeddings are injective functions which preserve graph-incidence within the incidence relation of a finite projective plane and are analogous to the notion of embedding used in planar graphs and topological graph theory. Although planar graphs have been well-studied, embeddings of graphs into finite projective planes are relatively new. Thus, embeddings may give insight into the structure of finite projective planes.

We begin by introducing necessary terminology, and establish counting lemmas used in the later constructions and proofs. In Section \ref{sec:CombCBG} we prove our main results, determining which complete bipartite graphs can be embedded in a given plane, and by counting the number of embeddings of complete bipartite graphs of large order. Finally, in Section \ref{secsubplanes}, we establish bounds on the number of vertices and edges an embedded
complete bipartite graph can have in any given Baer subplane. We also establish conditions when the image of a complete bipartite graph must contain a non-Baer subplane.

\medskip

We now give some terminology and well-known results necessary for later sections. Most of the content of this section is `folklore'; any terminology not found here may be found in \cite{Bol98} (graph theory), or \cite{HP}  (geometry).

We will consider a graph $G=\left(V(G),E(G)\right)$ to be a finite set of vertices $V(G)$, paired with a set $E(G)$ of 2-subsets of $V(G)$ called edges.  We will exclude directed graphs, multigraphs, and graphs with loops. For simplicity, an edge $\{a,b\}\in E(G)$ will be denoted as simply $ab$, or equivalently $ba$. When context is clear, $V(G)$ and $E(G)$ will be referred to simply as $V$ or $E$. We will say that $a\in V$ is (graph) incident with $e = ab \in E$.  The vertices $a$ and $b$ are called the endpoints of edge $e = ab$. Graph-incidence is a symmetric relation on $V\times E$. A bijective function $\alpha:G\rightarrow G$ that preserves incidence will be said to be an automorphism of $G$. Homomorphisms/isomorphisms between graphs are defined accordingly. The group (under composition) of all automorphisms of $G$ is denoted by $Aut(G)$.

If $G=(V,E)$ is a graph such that there is a partition of $V$ into two classes $X$ and $Y$ such that for every $ab\in E$, $a\in X$ and $b\in Y$ or $b\in X$ and $a \in Y$, then $G$ is a bipartite graph. In this case $X$ and $Y$ are called the classes of $G$. If every point in $X$ is connected to every point in $Y$, then $G$ is a complete bipartite graph and is denoted $K_{|X|,|Y|}$. It is easy to see that $K_{m,n}$ is isomorphic to $K_{n,m}$, and so without loss of generality we assume $m\geq n$ unless otherwise specified.

\begin{definition}
A \textit{projective plane} $\pi=(\mathcal P,\mathcal L,\mathcal I)$ is a collection of points $\mathcal P$ and lines $\mathcal L$, paired with a symmetric incidence relation $\mathcal I\subseteq (\mathcal P\times \mathcal L) \cup (\mathcal L \times \mathcal P)$ such that: \\
\textbf{(a)} For every $P_1,P_2 \in \mathcal P$, there  exists a unique $\ell \in \mathcal L$ such that $P_1 \mathcal{I}\ell$ and $P_2 \mathcal{I}\ell$. \\
\textbf{(b)}  Every line is incident with at least three points. \\
\textbf{(c)}  For every  $\ell_1, \ell_2 \in \mathcal L$, there  exists a unique $P \in \mathcal P$ such that $\ell_1 \mathcal{I}P$ and $\ell_2 \mathcal{I}P$. \\
\textbf{(d)}  There exists four points in $\mathcal P$, no three of which are all incident with a common line in $\mathcal L$.
\end{definition}

It is known that every finite projective plane $\pi$ is associated to a natural number $q$, called the \textit{order} of $\pi$, such that $|\mathcal P|= |\mathcal L|= q^2+q+1$, and that the number of points on each line is exactly $q+1$, as well as the number of lines through any point is exactly $q+1$.  All known examples of projective planes have order equal to the power of some prime.  In fact, for every power of a prime there are examples of planes having that order, and if this number is at least $9$, and not a prime number, then non-isomorphic planes having that order exist. It is a long-standing open question as to whether there exists a finite projective plane with order that is not a prime power.

\begin{definition}
Let $\pi=(\mathcal P,\mathcal L,\mathcal I)$ be a projective plane. If there exists a projective plane $\pi_0=(\mathcal P_0,\mathcal L_0,\mathcal I_0)$ such that $\mathcal P_0\subseteq \mathcal P, \mathcal L_0 \subseteq \mathcal L,$ and $\mathcal I_0\subseteq \mathcal I$. Then $\pi_0$ is said to be a subplane of $\pi,$ denoted $\pi_0 \subseteq \pi$.
\end{definition}

\begin{theorem}[Bruck, see \cite{B55}]
Let $\pi$ be a projective plane of order $q$. If $\pi_0$ is a subplane of $\pi$ with order $n$, then either $n^2=q$ or $q\geq n^2+n$.
\end{theorem}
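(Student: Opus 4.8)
The plan is to exploit how the subplane $\pi_0$ sits inside $\pi$ as an honest configuration of lines and points of $\pi$, and then to double count. First I would record three basic incidence facts. Since $\mathcal L_0 \subseteq \mathcal L$, every line of $\pi_0$ \emph{is} a line of $\pi$; I would check that such a line carries exactly $n+1$ points of $\pi_0$, namely the $n+1$ points incident to it in $\pi_0$ and no others, since a further subplane point on it would yield two distinct $\pi$-lines through two common points. Dually, any line of $\pi$ that is \emph{not} a line of $\pi_0$ meets $\mathcal P_0$ in at most one point, because two such points would be joined both by a subplane line and by the given line, forcing the two lines to coincide. Finally, two distinct lines of $\pi_0$ meet, in $\pi$, at their subplane intersection point, which lies in $\mathcal P_0$.

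Next I would fix a point $R \in \mathcal P \setminus \mathcal P_0$ (a proper subplane is assumed, so that $n<q$ and such an $R$ exists) and count the $n^2+n+1$ points of $\pi_0$ by sorting them along the $q+1$ lines of $\pi$ through $R$. Because two subplane lines meet only in $\mathcal P_0$ while $R \notin \mathcal P_0$, at most one subplane line passes through $R$. If none does, then every line through $R$ carries at most one subplane point, so $n^2+n+1 \le q+1$, giving $q \ge n^2+n$. If exactly one does, that line carries $n+1$ points while the remaining $q$ lines carry at most one each, yielding only the weaker bound $q \ge n^2$.

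The main obstacle is upgrading this weaker alternative to the sharp dichotomy, and I expect the decisive step to be a global covering argument. I would argue by contradiction: assume $q < n^2+n$. Then the first alternative above is impossible for \emph{every} external point, so each point of $\mathcal P \setminus \mathcal P_0$ lies on exactly one line of $\pi_0$. Now I would count external points through the subplane lines: each of the $n^2+n+1$ subplane lines contains $q+1-(n+1)=q-n$ external points, and distinct subplane lines share no external point since they meet inside $\mathcal P_0$, so these sets partition all external points. Equating $(n^2+n+1)(q-n)$ with the total number of external points $(q^2+q+1)-(n^2+n+1)=(q-n)(q+n+1)$ and dividing by $q-n>0$ forces $n^2=q$. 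Hence either $q=n^2$ or $q\ge n^2+n$. The care needed is precisely in justifying that every external point is covered exactly once, which is where the three structural facts from the first step all come into play.
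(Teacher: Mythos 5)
Your proof is correct. One thing to note at the outset: the paper does not prove this statement at all; it quotes it as Bruck's theorem with a citation to \cite{B55}, so the right comparison is with the classical argument. The classical proof fixes a point $P \notin \mathcal{P}_0$ lying on a line $\ell$ of $\pi_0$; since $\ell$ is the only subplane line through $P$ and every other line through $P$ meets $\mathcal{P}_0$ in at most one point, the $n^2$ subplane points off $\ell$ force $q \geq n^2$; and if $q > n^2$, some line through $P$ misses $\mathcal{P}_0$ entirely, whereupon the $n^2+n+1$ lines of $\pi_0$ meet that line in pairwise distinct points, giving $q+1 \geq n^2+n+1$. Your argument replaces this second, ``disjoint line'' step with a global double count: assuming $q < n^2+n$, your Case A is impossible at every external point, so every external point lies on exactly one subplane line, the subplane lines partition the external points into sets of size $q-n$, and $(n^2+n+1)(q-n) = (q-n)(q+n+1)$ forces $q = n^2$. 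The two routes are dual in flavor: you rule out the existence of a point incident with no subplane line, while the classical proof exploits the existence of a line incident with no subplane point; both rest on the same three structural facts you isolate, and yours trades the explicit construction of a disjoint line for a partition count, which you justify correctly (disjointness from the intersection-in-$\mathcal{P}_0$ fact, coverage from the failure of Case A). Finally, you were right to flag properness: as stated in the paper the theorem is false for $\pi_0 = \pi$ (with $n = q \geq 2$ neither alternative holds), and the hypothesis $n < q$ is exactly what guarantees your external point $R$ exists and that the division by $q - n$ is legitimate.
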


In the case when $n^2=q$ we say that $\pi_0$ is a Baer subplane of $\pi$. It is known that every line of $\pi$ must intersect $\pi_0$ in at least one point. Similarly, every point of $\pi$ is incident with at least one line of $\pi_0$.

\medskip

We are interested in studying how graphs will embed in a finite projective plane. First, we carefully define our notion of embedding.

\begin{definition}\label{defembedding}
An embedding $\phi$ of a graph $G=(V,E)$ into a projective plane $\pi=(\mathcal P, \mathcal L, \mathcal I)$ is an injective function
\[
\phi: V \rightarrow \mathcal P
\]
that induces, by preserving incidence, an injective function
\[
\overline{\phi}: E \rightarrow \mathcal L.
\]
If such an embedding exists we say that $G$ embeds in $\pi$,  and write $G\hookrightarrow \pi$.
\end{definition}

We remark that multigraphs are out of the range of the previous definition because we are identifying the \emph{unique} line through two points with a \emph{unique} edge through them. Also, since the points on any given line in a finite projective plane are not ordered, oriented graphs will not be considered. Finally, although Definition \ref{defembedding} is perhaps more precise and rigorous, when context is clear, we will denote an embedding $\phi$ of $G$ into $\pi$ as $\phi: G\hookrightarrow \pi$.

\medskip

The study of embeddings of graphs into projective planes has a short history. Although the original inspiration for the study of cycles in projective planes takes us back many decades (see \cite{LMV09} and \cite{LMV13} for a thorough historical narrative), considering the embedding of more complicated graphs has not been done in a serious and systematic way. Hence, this article intends to answer a few natural questions that quickly arise, and to present thought-provoking results that show unexpected connections between embeddings of complete bipartite graphs and well-known substructures of some projective planes.

Finally, we would like to remark that the work in \cite{LMV09} and  \cite{Vor12} deals with counting $k$-cycles in projective planes, for small values of $k$. Meanwhile, the pancyclicity (admitting embeddings of every possible cycle) of every every projective plane  is proved in \cite{LMV13}. Also, an algebraic approach to embeddings of cycles, and the study of wheels and gear graphs embedded into projective planes may be found in \cite{PVW13}.

\section{General results on embeddings of graphs}

Note that if $G$ is a graph, $\varphi \in Aut(G)$, and $\phi$ is an embedding of $G$ into $\pi$, then $\phi \circ \varphi$ is also an embedding of $G$ into $\pi$. However, these two embeddings `look the same' in $\pi$, as they use the same points and lines, and the incidence between the embedded vertices and edges is also the same. This inspires the following definitions.

\begin{definition}\label{defn&N}
Let $G$ be a graph, $\pi$ be a projective plane, and $\phi, \psi$ be embeddings of $G$ into $\pi$.\\
\textbf{(a)}  If $\psi=\phi \circ \varphi$, for some $\varphi \in Aut(G)$, we will say that $\phi$ and $\psi$ are equivalent embeddings of $G$ into $\pi$. \\
\textbf{(b)} We denote the number of embeddings of $G$ in $\pi$ by $N_{\pi}(G)$. The number of un-equivalent embeddings of $G$ into $\pi$ is denoted by $n_{\pi}(G)$.
\end{definition}

\begin{remark}\label{remN=nA}
Definition \ref{defn&N}\textbf{(a)} yields a free action of $Aut(G)$ on the set of embeddings of $G$ into $\pi$. Hence, $N_{\pi}(G)=n_{\pi}(G)|Aut(G)|$.
\end{remark}

\begin{definition}\label{defimage}
Given $\phi$, an embedding of a graph $G$ into a projective plane $\pi$, the incidence structure $(\phi(V(G)), \phi(E(G)), \I)$, where $\I$ is naturally induced by $\phi$ and the incidence in $G$, will be said to be an image of $G$ in $\pi$.
\end{definition}

\begin{remark}\label{remn=imag}
If $\phi$ and $\psi$ are two embeddings of a graph $G$ yielding the same image then $\phi^{-1} \psi$ is an automorphism of $G$. Hence, the number of images of $G$ in $\pi$ is equal to $n_{\pi}(G)$.
\end{remark}

We summarize Remarks \ref{remN=nA} and  \ref{remn=imag} in the following theorem.

\begin{theorem}\label{thmn=NAut}
Let $G$ be a graph embedded in a projective plane $\pi$. Then,
\[
N_{\pi}(G)=n_{\pi}(G)|Aut(G)|
\]
Moreover, $n_{\pi}(G)$ is equal to the number of images of $G$ in $\pi$.
\end{theorem}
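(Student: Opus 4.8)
The plan is to prove the two displayed claims separately, since the theorem merely packages Remarks \ref{remN=nA} and \ref{remn=imag}. Let $\mathcal{E}$ be the finite set of all embeddings of $G$ into $\pi$, so that $|\mathcal{E}|=N_{\pi}(G)$ by definition. For the identity $N_{\pi}(G)=n_{\pi}(G)|Aut(G)|$, I would first make the action announced in Remark \ref{remN=nA} precise: define a right action of $Aut(G)$ on $\mathcal{E}$ by $\phi\cdot\varphi=\phi\circ\varphi$. The preliminary observation at the start of this section guarantees that $\phi\circ\varphi$ is again an embedding, and the one-line computation $(\phi\cdot\varphi_1)\cdot\varphi_2=\phi\circ(\varphi_1\varphi_2)=\phi\cdot(\varphi_1\varphi_2)$ shows this assignment respects composition, so it is a genuine action. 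The essential point is that it is \emph{free}: if $\phi\circ\varphi=\phi$, then injectivity of $\phi\colon V\to\mathcal{P}$ (part of Definition \ref{defembedding}) lets me cancel $\phi$ and conclude $\varphi=\mathrm{id}_{G}$, so every stabilizer is trivial. By the orbit--stabilizer theorem each orbit then has cardinality exactly $|Aut(G)|$. Since two embeddings are equivalent precisely when they lie in a common orbit, the $n_{\pi}(G)$ equivalence classes are exactly the orbits, and summing their sizes gives $N_{\pi}(G)=n_{\pi}(G)\,|Aut(G)|$.

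For the \emph{moreover} statement I would exhibit a bijection between equivalence classes of embeddings and images of $G$ in $\pi$. Each embedding $\phi$ determines the image $(\phi(V),\phi(E),\I)$ of Definition \ref{defimage}, and equivalent embeddings share it: if $\psi=\phi\circ\varphi$ with $\varphi\in Aut(G)$, then $\psi(V)=\phi(\varphi(V))=\phi(V)$ and likewise $\psi(E)=\phi(E)$, because $\varphi$ permutes $V$ and $E$. Conversely, as in Remark \ref{remn=imag}, if $\phi$ and $\psi$ produce the same image, then $\phi$ is a bijection onto the common point set, so $\varphi:=\phi^{-1}\circ\psi$ is a well-defined bijection of $V$; checking that it preserves graph-incidence in both directions shows $\varphi\in Aut(G)$ and $\psi=\phi\circ\varphi$. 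Hence the rule sending a class to its common image is well defined and injective, and it is surjective since every image arises from some embedding. Counting then yields that the number of images equals $n_{\pi}(G)$.

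The action axioms and the compatibility $\psi(V)=\phi(V)$ are immediate, so the only delicate point is the converse in the second part: confirming that $\phi^{-1}\circ\psi$ is an actual automorphism rather than a bare vertex bijection. This forces me to use that $\phi$ and $\psi$ induce the \emph{same} incidence structure on the common image, so that an edge $ab$ is sent to the same line by both $\overline{\phi}$ and $\overline{\psi}$; this is what makes adjacency both preserved and reflected. I expect this bookkeeping about how the induced edge map $\overline{\phi}$ interacts with $\phi$ to be the main, if mild, obstacle in an otherwise routine argument.
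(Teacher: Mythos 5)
Your proposal is correct and follows exactly the paper's route: the paper "proves" Theorem \ref{thmn=NAut} simply by citing Remarks \ref{remN=nA} (the free action of $Aut(G)$ on embeddings, giving $N_{\pi}(G)=n_{\pi}(G)|Aut(G)|$ via orbit counting) and \ref{remn=imag} (same image forces $\phi^{-1}\circ\psi\in Aut(G)$, giving the bijection with images). Your write-up just makes explicit the details the paper leaves implicit --- the verification of the action axioms, freeness from injectivity of $\phi$, and the incidence bookkeeping showing $\phi^{-1}\circ\psi$ is a genuine automorphism.
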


\medskip

Theorem \ref{thmn=NAut} makes it fairly easy to count $n_{PG(2,2)}(G)$ for various graphs, such as small cycles and complete graphs. In all these cases, $n_{PG(2,2)}(G)$ is a multiple of $7 =2^2+2+1$. This observation yields the following easy generalization of Lemma 8 in \cite{LMV09}.

\begin{lemma}
Let $G$ be a graph embedded in a projective plane $\pi$ of order $q$ that admits a cyclic collineation group $H$ of order $m$ acting freely on the points of $\pi$. Then, $(m/d) \ |\  n_{\pi}(G)$, where $gcd(v(G), m)=d$.

The same result holds for a group acting freely on the lines of $\pi$, under the condition $gcd(e(G), m)=d$.
\end{lemma}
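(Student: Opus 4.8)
The plan is to let the collineation group $H$ act on the set of \emph{images} of $G$ in $\pi$, and then combine the orbit--stabilizer theorem with Theorem \ref{thmn=NAut}, which identifies $n_{\pi}(G)$ with the number of such images. Since any collineation $h \in H$ sends an embedding $\phi$ to another embedding $h \circ \phi$, and carries the image of $\phi$ onto the image of $h \circ \phi$, the group $H$ genuinely acts on the finite set $\mathcal{S}$ of images of $G$ in $\pi$, with $|\mathcal{S}| = n_{\pi}(G)$. The count $n_{\pi}(G)$ is therefore the sum of the sizes of the $H$-orbits on $\mathcal{S}$, so it suffices to show that each orbit has size divisible by $m/d$.

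The central step is to bound the stabilizers. Fix an image $S \in \mathcal{S}$ with point set $\phi(V(G))$ of size $v(G)$, and let $H_S \le H$ be its stabilizer. Because $H$ is cyclic, $H_S$ is cyclic; write $s = |H_S|$ and let $h$ generate $H_S$. Then $\langle h \rangle$ permutes the $v(G)$ points of $\phi(V(G))$, and because $H$ acts freely on the points of $\pi$, this restricted action is still free. Hence every $\langle h \rangle$-orbit inside $\phi(V(G))$ has size exactly $s$, which forces $s \mid v(G)$. Since also $s \mid m$, we conclude $s \mid \gcd(v(G),m) = d$.

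With the stabilizer bound in hand, the orbit of $S$ has size $m/s = |H|/|H_S|$ by orbit--stabilizer, and because $s \mid d$ we get $(m/d) \mid (m/s)$. Thus every $H$-orbit on $\mathcal{S}$ has size a multiple of $m/d$, and summing over the orbits (which partition $\mathcal{S}$) yields $(m/d) \mid n_{\pi}(G)$, as claimed. The dual statement follows from the identical argument applied to the line set $\overline{\phi}(E(G))$, which has $e(G)$ elements: a generator of a stabilizer then acts freely on these $e(G)$ lines, so its order divides $\gcd(e(G),m) = d$.

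The main obstacle, and the place where the hypotheses must be invoked with care, is precisely this stabilizer computation. The freeness of the $H$-action on \emph{all} points of $\pi$ is exactly what guarantees that a collineation fixing an image only setwise must act semiregularly on the embedded vertices, so that its order divides $v(G)$ rather than merely $m$; were some stabilizing collineation allowed to fix an embedded vertex, the bound $s \mid v(G)$ would fail and the conclusion could break down. The remaining points are routine: $H_S$ is cyclic simply because subgroups of cyclic groups are cyclic, and the identification $|\mathcal{S}| = n_{\pi}(G)$ is furnished directly by Theorem \ref{thmn=NAut}.
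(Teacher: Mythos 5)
Your proof is correct, and it is essentially the intended argument: the paper gives no explicit proof, presenting the lemma as an ``easy generalization'' of Lemma 8 in \cite{LMV09}, whose proof is precisely this orbit--stabilizer argument — let the collineation group act on the set of images (counted by $n_{\pi}(G)$ via Theorem \ref{thmn=NAut}), and use freeness on points to force any setwise stabilizer to act semiregularly on the $v(G)$ embedded vertices, so its order divides $\gcd(v(G),m)=d$ and every orbit has size a multiple of $m/d$. Your treatment of the dual statement via the $e(G)$ embedded lines matches as well.
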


\begin{corollary}
If a projective plane $\pi$ of order $q$ admits a Singer cycle, and $G$ is a graph with $gcd( v(G), q^2+q+1)=1$ or  $gcd( e(G), q^2+q+1)=1$, then $(q^2+q+1) \ | \ n_{\pi}(G)$.
\end{corollary}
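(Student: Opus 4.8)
The plan is to derive this statement directly from the preceding Lemma, since a Singer cycle is precisely an instance of the hypothesis there. First I would recall the relevant facts: a Singer cycle of $\pi$ is a cyclic collineation group $H$ acting transitively on the points of $\pi$, and a classical result guarantees that such a group has order exactly $q^2+q+1 = |\mathcal P|$. Because $H$ acts transitively on a set whose cardinality equals $|H|$, the action is in fact regular, and in particular \emph{free}: no nontrivial element of $H$ fixes a point. By the standard point-line duality of projective planes (equivalently, by the well-known fact that a Singer cycle also acts regularly on $\mathcal L$, whose cardinality is likewise $q^2+q+1$), the group $H$ acts freely on the lines of $\pi$ as well.

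With these observations in hand, I would simply invoke the Lemma with $m = q^2+q+1$. In the first case, the assumption $gcd(v(G), q^2+q+1)=1$ means $d=1$, so the Lemma yields $m/d = q^2+q+1 \mid n_{\pi}(G)$. In the second case, $gcd(e(G), q^2+q+1)=1$, and since $H$ acts freely on the lines of $\pi$, the dual form of the Lemma applies with the same $m$ and again $d=1$, giving $q^2+q+1 \mid n_{\pi}(G)$. Either of the two stated hypotheses therefore suffices to force $(q^2+q+1) \mid n_{\pi}(G)$.

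I do not anticipate any genuine obstacle here; the entire content lies in the Lemma, and the Corollary is merely the special case in which the free cyclic group attains the maximal possible order $q^2+q+1$, with the added flexibility of using either the point-action or the line-action. The one step I would make explicit, rather than take for granted, is the justification that a Singer cycle acts freely (and not merely transitively) on points and on lines. This follows immediately from the order of $H$ coinciding with the number of points (respectively lines), but stating it clearly is what connects the classical notion of a Singer cycle to the precise hypothesis ``acting freely'' required by the Lemma.
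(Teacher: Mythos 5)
Your proof is correct and matches the paper's intent exactly: the paper offers no separate proof, treating the corollary as an immediate application of the preceding Lemma with $m = q^2+q+1$ and $d=1$, which is precisely your argument. Your explicit justification that a Singer cycle acts freely (regularly) on both points and lines is the right detail to spell out and is entirely standard.
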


\begin{corollary}\label{corCandK}
If $gcd(s, q^2+q+1)=1$, then $q^2+q+1$ divides both  $n_{PG(2,q)}(C_s)$ and $n_{PG(2,q)}(K_s)$.
\end{corollary}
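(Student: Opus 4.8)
The plan is to treat this statement as a direct specialization of the corollary immediately preceding it, so the work reduces to verifying that $PG(2,q)$ meets that corollary's hypotheses and that the vertex counts of $C_s$ and $K_s$ activate the stated gcd condition. First I would invoke the standard fact that the Desarguesian plane $PG(2,q)$ admits a Singer cycle, i.e. a collineation of order $q^2+q+1$ whose cyclic group acts regularly, and in particular freely, on the $q^2+q+1$ points of the plane (and dually on the lines). This places $PG(2,q)$ squarely within the hypotheses of the previous corollary with parameter $m=q^2+q+1$.

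Next I would simply count vertices. The cycle graph has $v(C_s)=s$ and the complete graph has $v(K_s)=s$, so the hypothesis $gcd(s,q^2+q+1)=1$ gives $gcd(v(G),q^2+q+1)=1$ in both cases. Thus the vertex form of the gcd condition in the preceding corollary is satisfied for $G=C_s$ and for $G=K_s$, and applying that corollary to each graph in turn yields $(q^2+q+1)\mid n_{PG(2,q)}(C_s)$ and $(q^2+q+1)\mid n_{PG(2,q)}(K_s)$, which is the desired conclusion.

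There is essentially no genuine obstacle here beyond bookkeeping, but the one point worth flagging is the temptation, in the case of $K_s$, to argue via the edge count $e(K_s)=\binom{s}{2}$, whose greatest common divisor with $q^2+q+1$ is not controlled by the hypothesis. The vertex condition is what does the work for both families, and this is precisely why the statement is phrased in terms of the common value $s=v(C_s)=v(K_s)$ rather than in terms of edges; keeping track of which of the two alternative conditions in the parent corollary one is invoking is the only subtlety to watch.
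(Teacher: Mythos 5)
Your proposal is correct and matches the paper's intent exactly: the paper offers no separate proof because this corollary is precisely the specialization of the preceding Singer-cycle corollary to $PG(2,q)$, using $v(C_s)=v(K_s)=s$ to satisfy the vertex form of the gcd hypothesis. Your remark that the edge count $e(K_s)=\binom{s}{2}$ is not controlled by the hypothesis, so the vertex condition is the one that must be invoked for $K_s$, is a correct and worthwhile clarification.
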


\medskip

\begin{remark}\label{remisomsamen}
If a graph $G$ is embedded in a projective plane $\pi$, then $G$ can also be embedded in any other plane that is isomorphic to $\pi$, or in one that has a subplane isomorphic to $\pi$.  It is also immediate that if a graph $G$ embeds in $\pi$ then so does every subgraph of $G$.
\end{remark}

One of the most fundamental questions about embeddings asks what graphs can be embedded in projective planes, and more specifically, what graphs can be embedded in a given plane. The first (trivial) bounds are given by the order $q$ of the plane $\pi$ that will `host' the graph $G$. For instance, $G$ cannot be embedded in $\pi$ if $v(G)$ or $e(G)$ are larger than $q^2+q+1$, as this is the number of lines, and points, in $\pi$. Similarly, a graph having a vertex of degree larger than $q+1$ cannot be embedded in $\pi$ because there are exactly $q+1$ lines through any given point of $\pi$. Thus we have shown.

\begin{lemma}
Let $G$ be a graph embedded in a projective plane $\pi$ of order $q$, then
\[
v(G) \leq q^2+q+1 \hspace{1in} e(G) \leq q^2+q+1
\]
and
\[
deg(v) \leq q+1
\]
for all $v \in V(G)$.
\end{lemma}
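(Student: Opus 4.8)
The plan is to exploit the injectivity built into Definition \ref{defembedding} together with the standard parameters of a plane of order $q$ recalled after the definition of a projective plane, namely that $|\mathcal P| = |\mathcal L| = q^2+q+1$ and that exactly $q+1$ lines pass through each point. All three bounds are then cardinality or counting arguments with no real depth.

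First I would establish the two cardinality bounds. Since $\phi: V(G) \to \mathcal P$ is injective, its image $\phi(V(G))$ is a subset of $\mathcal P$ of size exactly $v(G)$, which forces $v(G) \leq |\mathcal P| = q^2+q+1$. The bound on $e(G)$ is identical in spirit: the induced map $\overline{\phi}: E(G) \to \mathcal L$ is also injective by the definition of an embedding, so $\overline{\phi}(E(G))$ is a subset of $\mathcal L$ of size exactly $e(G)$, giving $e(G) \leq |\mathcal L| = q^2+q+1$.

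For the degree bound, fix $v \in V(G)$ and consider the edges incident with $v$, say $e_1 = va_1, \dots, e_k = va_k$ where $k = \deg(v)$. The key step is to observe that every line $\overline{\phi}(e_i)$ passes through the common point $\phi(v)$: because $\phi$ preserves incidence, $\overline{\phi}(va_i)$ is the unique line of $\pi$ incident with both $\phi(v)$ and $\phi(a_i)$, and in particular it is incident with $\phi(v)$. Since $\overline{\phi}$ is injective, the lines $\overline{\phi}(e_1), \dots, \overline{\phi}(e_k)$ are $k$ distinct lines all passing through $\phi(v)$. As there are exactly $q+1$ lines through any point of $\pi$, this yields $\deg(v) = k \leq q+1$.

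None of the three bounds presents a genuine obstacle; the only point requiring a moment's care is the verification that $\overline{\phi}(e_i)$ is incident with $\phi(v)$, which is precisely the incidence-preservation clause of the definition of an embedding rather than an additional hypothesis. Everything else is a direct application of injectivity and the counting facts about finite projective planes, so the statement follows as already indicated in the paragraph preceding it.
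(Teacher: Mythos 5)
Your proposal is correct and follows essentially the same argument the paper gives (in the paragraph preceding the lemma): injectivity of $\phi$ and $\overline{\phi}$ against the counts $|\mathcal P| = |\mathcal L| = q^2+q+1$, and the fact that exactly $q+1$ lines pass through any point for the degree bound. You simply spell out the incidence-preservation step more explicitly than the paper does.
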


The bounds above are sharp. For the first two bounds we refer the reader to \cite{LMV13}, where given a projective plane $\pi$, a cycle using all lines and points of $\pi$ is constructed.  For an example reaching the latter bound, we observe that the set of lines through a point yields a trivial example.  For a less trivial example, we consider two distinct points $P$ and $Q$ in $\pi$, and $l$ a line that contains neither $P$ nor $Q$. Let $\{ l_1, l_2, \cdots , l_{n+1}  \}$ be the set of lines through $P$, $x_i = l_i \cap l$ for all $i =1,2,\cdots , q+1$, and let $m_i$ be the line through $Q$ and $x_i$, for all $i=1,2, \cdots , n+1$. Note that one of the lines through $P$ must go through $Q$.  This line will go through exactly one of the $x_i$'s.  Without loss of generality, assume it goes through $x_1$.  Then, the graph with vertices $\{P,Q\} \cup \{ x_i : \ i= 2,\ldots , q+1 \}$ and edges all the lines through $P$ or $Q$ has two vertices with degree exactly $q+1$.

\medskip

Embeddings of cycles have already been studied in \cite{LMV09} and \cite{LMV13}, thus the natural next family of graphs to consider would be that of complete graphs. Most precisely, we would like to know how large $n$ can be in order to embed $K_n$ into a plane of order $q$.  This problem boils down, quite naturally, to a well-studied object in finite geometry.

\begin{definition}
Let $\pi$ be a projective plane of order $q$. An $n$-arc in $\pi$ is a set of $n$ points of $\pi$ such that no three of them are collinear. If $q$ is odd, a $(q+1)$-arc in $\pi$ is said to be an oval. If $q$ is even, a $(q+2)$-arc in $\pi$ is called a hyperoval.
\end{definition}

\begin{remark}\label{removals}
No arc  can have cardinality larger than that of an oval. Not all projective planes contain ovals. For instance, there are  exactly four finite projective planes of order $16$ containing no hyperovals (see \cite{PRS96}).
\end{remark}

\begin{theorem}\label{thmcapcomplete}
Let $\pi$ be a projective plane of order $q$, and $n\in \mathbb{N}$. Then, $K_n \hookrightarrow \pi$ if and
only if there is an $n$-arc in $\pi$. Moreover, $n_{\pi}(K_n)$ is equal to the number of $n$-arcs in $\pi$.
\end{theorem}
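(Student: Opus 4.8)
The plan is to reduce both the existence and the enumeration of embeddings of $K_n$ entirely to properties of the image point set. The essential observation is that an embedding of $K_n$ has almost no freedom once the vertices are placed: if $\phi\colon V(K_n)\to\mathcal P$ is injective, then for every edge $ab\in E(K_n)$ the induced line $\overline{\phi}(ab)$ is \emph{forced} to be the unique line of $\pi$ through the distinct points $\phi(a)$ and $\phi(b)$, which is well defined precisely because $\phi$ is injective. Thus $\phi$ alone determines $\overline{\phi}$ completely, and the only condition left to verify for $\phi$ to be an embedding is that this forced $\overline{\phi}$ be injective.

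First I would establish, for an injective $\phi$, the equivalence: $\overline{\phi}$ is injective if and only if no three points of $\phi(V(K_n))$ are collinear. For the forward (contrapositive) direction, if $\phi(a),\phi(b),\phi(c)$ lie on a common line $\ell$ for distinct vertices $a,b,c$, then $\overline{\phi}(ab)=\ell=\overline{\phi}(ac)$ while $ab\neq ac$, so $\overline{\phi}$ fails to be injective. Conversely, if two distinct edges $ab,cd$ satisfy $\overline{\phi}(ab)=\overline{\phi}(cd)=\ell$, then $\phi(a),\phi(b),\phi(c),\phi(d)$ all lie on $\ell$; since $\{a,b\}\neq\{c,d\}$ there are at least three distinct vertices among $a,b,c,d$, and by injectivity of $\phi$ their images give three collinear points. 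Because an $n$-arc is exactly a set of $n$ points with no three collinear, this shows that $\phi$ is an embedding of $K_n$ if and only if it is an injection whose image $\phi(V(K_n))$ is an $n$-arc. The biconditional of the theorem then follows at once: any embedding yields an $n$-arc as its vertex image, and conversely any bijection from $V(K_n)$ onto a given $n$-arc is an embedding.

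For the enumeration I would invoke Theorem \ref{thmn=NAut}, which gives that $n_\pi(K_n)$ equals the number of images of $K_n$ in $\pi$. The argument closes cleanly here because, for the complete graph, the edge-image set $\phi(E(K_n))$ is the set of all lines joining pairs of points of $\phi(V(K_n))$, hence is completely determined by the vertex-image set. Consequently an image of $K_n$ is determined by, and determines, its underlying point set, which by the previous step is precisely an $n$-arc. The assignment sending an image to its point set is therefore a bijection between images of $K_n$ in $\pi$ and $n$-arcs of $\pi$, giving $n_\pi(K_n)$ equal to the number of $n$-arcs.

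I expect the only genuinely delicate point to be the careful handling of the injectivity equivalence when the two coincident edges are disjoint rather than adjacent: one must note that $\{a,b\}\neq\{c,d\}$ forces at least three distinct vertices, and then lean on the injectivity of $\phi$ to produce three genuinely distinct collinear image points. Everything else is bookkeeping built on two structural facts special to $K_n$, namely the forced nature of $\overline{\phi}$ and the recoverability of the whole image from its point set alone.
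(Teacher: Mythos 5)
Your proposal is correct and follows essentially the same route as the paper's proof: both characterize embeddings of $K_n$ by the condition that no three vertex-images are collinear (i.e.\ the vertex set maps onto an $n$-arc), and both obtain the count $n_\pi(K_n)$ from the bijection between images of $K_n$ and $n$-arcs via Theorem \ref{thmn=NAut}. Your write-up merely makes explicit some details the paper leaves implicit, such as the case of two disjoint edges mapping to a common line.
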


\begin{proof}
Let $P,Q,R$ be any three points of $\pi$ corresponding to vertices of an embedded $K_n$.  Then, since the lines joining any $2$ of these points must corresponding to edges in $K_n$, it is necessary that $P,Q,R$ are not on a common line. \\
Conversely, if there is an $n$ arc in $\pi$, then we can connect all possible pairs of these points using distinct lines, yielding an embedding of $K_n$ into $\pi$.\\
Note that we have defined a bijection between images of $K_n$ in $\pi$ and $n$-gons. This finishes the proof.
\end{proof}

\begin{corollary}\label{corcomplete}
Let $\pi_q$ be a projective plane of order $q$, and $n\in \mathbb{N}$. If $K_n \hookrightarrow \pi_q$, then $q$ is even and $n \leq q+2$, or $q$ is odd and $n \leq q+1$.
\end{corollary}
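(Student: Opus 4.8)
The plan is to reduce the claim to a bound on the size of arcs and then to prove that bound directly. By Theorem \ref{thmcapcomplete}, an embedding $K_n \hookrightarrow \pi_q$ exists exactly when $\pi_q$ contains an $n$-arc, so the corollary follows once I show that an $n$-arc forces $n \le q+2$ in general, and $n \le q+1$ when $q$ is odd.

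First I would establish the universal bound $n \le q+2$ by fixing a point $P$ of an $n$-arc $\mathcal{A}$ and counting within the pencil of lines through $P$. There are exactly $q+1$ lines through $P$, every other point of $\mathcal{A}$ lies on a unique one of them (axiom (a)), and since no three points of $\mathcal{A}$ are collinear each such line carries at most one point of $\mathcal{A}$ besides $P$. Hence the remaining $n-1$ points of the arc inject into the $q+1$ lines through $P$, giving $n-1 \le q+1$, i.e. $n \le q+2$. This already settles the even case.

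The heart of the argument is ruling out $n = q+2$ when $q$ is odd. Here I would classify the lines of $\pi_q$ by their intersection with $\mathcal{A}$ into secants (two arc points), tangents (one arc point), and external lines (none); the no-three-collinear condition guarantees these are the only possibilities. Repeating the pencil count above through an arc point $P$ shows that $P$ lies on exactly $n-1$ secants and therefore on $q+1-(n-1) = q+2-n$ tangents. When $n = q+2$ this number is zero, so a $(q+2)$-arc has no tangent lines at all, and every line of $\pi_q$ meets $\mathcal{A}$ in either $0$ or $2$ points. I would then pick any point off the arc --- one exists since $q^2+q+1 > q+2$ --- and observe that the $q+1$ lines through it partition the $q+2$ arc points, each such line contributing $0$ or $2$ of them. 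This forces $q+2$ to be even, contradicting $q$ odd, so no $(q+2)$-arc exists in odd order and thus $n \le q+1$ there.

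I expect the parity step in the last paragraph to be the main obstacle, since everything before it is a direct pencil count. This final contradiction is exactly the classical fact that hyperovals exist only in planes of even order, and the care needed is in verifying that a maximal $(q+2)$-arc genuinely admits no tangents, so that the clean even/odd dichotomy of the secant partition goes through.
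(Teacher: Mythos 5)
Your proposal is correct, and its first step is exactly the paper's argument: Theorem \ref{thmcapcomplete} converts the embedding question into a question about the maximum size of an arc. Where you diverge is that the paper stops there --- it invokes the classical bound on arc sizes as known folklore (this is the content of Remark \ref{removals} together with the definition of ovals and hyperovals, with the standard reference being \cite{HP}), so for the paper the corollary is literally immediate. You instead prove that bound from scratch: the pencil count through an arc point giving $n-1 \le q+1$, the secant/tangent count showing a $(q+2)$-arc admits no tangents, and the parity argument (lines through an external point partition the arc into sets of size $0$ or $2$, forcing $q+2$ even). All three steps are correct; this is the standard classical proof that hyperovals exist only in even order, and your care about the existence of an external point ($q^2+q+1 > q+2$) and about distinctness of the secants through a fixed arc point is exactly right. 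What your version buys is self-containedness --- the corollary no longer rests on an unproved remark --- at the cost of reproving a well-known result; what the paper's version buys is brevity, by delegating the geometry to the literature.
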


The converse of Corollary \ref{corcomplete} is not true, as there are planes where ovals/hyperovals do not exist (see Remark \ref{removals} above). Note that this yields examples of planes that, even though they have the same order, do not admit embeddings of the same graphs.  Hence, the study of embeddings of graphs in projective planes could be used to classify planes. \\

We now look at conditions that would assure the embedding of a $K_n$ in a plane of order $q$. We want this condition to be independent from whether or not the plane contains ovals/hyperovals.

Let $\pi$ be a projective plane of order $q$ (think $q$ large). Since any projective plane contains a quadrangle, $K_4$ is embedded in $\pi$. The $6$ lines of $K_4$ cover at most $6(q+1)$ points. Thus if $q$ is large then there will be a point in $\pi$ that is not in any line of the embedding. It follows that we can add any of these points to $K_4$ to create an embedding of $K_5$ in $\pi$. Now we will have $10$ lines used in the embedding and thus at most $10(q+1)$ points covered by those lines. Then again, if $q$ is large,  there will be free points to create a $K_6$.

We can keep repeating the process above. The graph $K_n$ has $n(n-1)/2$ edges, and thus at most $n(n-1)(q+1)/2$ vertices are covered by it. Hence, if $q^2+q+1 \geq  n(n-1)(q+1)/2$ then $K_n$ can be embedded in $\pi$. Thus it is enough to ask $q \geq  n(n-1)/2$. We can phrase this result as follows.

\begin{theorem}
Let $G$ be the complete graph $K_{n}$, and choose $q \geq  n(n-1)/2$.  Then $G$ can be embedded in any projective plane $\pi$ of order $q$.
\end{theorem}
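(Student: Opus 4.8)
The plan is to proceed by induction on the number of vertices, building the embedding one vertex at a time by a greedy argument, exactly as foreshadowed in the discussion preceding the statement. For the base case I would take $K_4$: axiom \textbf{(d)} in the definition of a projective plane furnishes four points, no three collinear, and joining each pair by the unique line through them realizes $K_4\hookrightarrow\pi$. Since for $n\le 4$ the graph $K_n$ is a subgraph of $K_4$, it then embeds by Remark \ref{remisomsamen}. So I may assume $n\ge 5$, suppose inductively that $K_m\hookrightarrow\pi$ for some $4\le m\le n-1$, and aim to produce an embedding of $K_{m+1}$.

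For the inductive step I would fix an image of $K_m$ in $\pi$, with vertices $v_1,\dots,v_m$ and the $\binom{m}{2}$ lines joining them in pairs. The key reduction is that it suffices to find a single point $P$ lying on \emph{none} of these $\binom{m}{2}$ edge-lines, for then adjoining $P$ to the vertex set, with the $m$ lines $Pv_1,\dots,Pv_m$ as new edges, will give $K_{m+1}$. To see such a $P$ exists, note that the $\binom{m}{2}$ edge-lines cover at most $\binom{m}{2}(q+1)$ points. Since $m\le n-1$ gives $\binom{m}{2}<\binom{n}{2}\le q$ by hypothesis, we get $\binom{m}{2}(q+1)\le q(q+1)=q^2+q<q^2+q+1$, so a point $P$ avoiding all of them remains.

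It then remains to verify that adjoining $P$ yields a valid embedding in the sense of Definition \ref{defembedding}, and this is the step I would write out most carefully. Because each old vertex $v_i$ lies on some old edge-line while $P$ lies on none, $P$ is distinct from every $v_i$, keeping the vertex map injective. Each new line $Pv_i$ differs from every old edge-line, since $P$ is incident with the former but with none of the latter; and two new lines $Pv_i,Pv_j$ can coincide only if $v_i,v_j$ both lie on a single line through $P$, which would force that line to be the old edge-line $v_iv_j$ and hence to contain $P$, a contradiction. Thus the $\binom{m+1}{2}$ edges of $K_{m+1}$ map to $\binom{m+1}{2}$ distinct lines and incidence is preserved, completing the induction.

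I do not expect a genuine obstacle here; the content of the argument is that one mild condition on $P$ (avoiding the existing edge-lines) simultaneously secures vertex-injectivity, edge-injectivity, and distinctness of the new lines from the old, which is precisely what Definition \ref{defembedding} demands. The only point requiring attention is this bookkeeping in the final verification, the counting inequality itself being routine; in particular the hypothesis $q\ge n(n-1)/2$ is used exactly to guarantee the free point at the most demanding step $m=n-1$.
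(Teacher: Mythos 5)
Your proposal is correct and follows essentially the same route as the paper: a greedy induction starting from the quadrangle guaranteed by axiom \textbf{(d)}, adjoining at each stage a point avoiding the at most $\binom{m}{2}(q+1)$ points covered by the existing edge-lines, with the hypothesis $q\geq n(n-1)/2$ guaranteeing such a point exists at every step. The only difference is cosmetic: you spell out the verification that the extended map is a valid embedding (vertex-injectivity, edge-injectivity, distinctness of new lines from old), which the paper leaves implicit.
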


The following corollary is an immediate consequence of Remark \ref{removals}.

\begin{corollary}
Every graph $G$ can be embedded in any projective plane of order $q \geq  v(G)(v(G)-1)/2$.
\end{corollary}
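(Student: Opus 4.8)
The plan is to reduce the general statement to the complete-graph case already settled by the preceding theorem. Writing $n = v(G)$, the first step is to observe that $G$ is a subgraph of the complete graph $K_n$: every edge of $G$ is a $2$-subset of its $n$ vertices, and $K_n$ is precisely the graph whose edge set consists of \emph{all} such $2$-subsets on those $n$ vertices. Hence $G \subseteq K_n$ as graphs, with no edge of $G$ failing to appear in $K_n$.

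Next I would apply the preceding theorem with this same value of $n$. The hypothesis of the corollary reads $q \geq v(G)(v(G)-1)/2 = n(n-1)/2$, which is exactly the bound required by that theorem. It therefore guarantees $K_n \hookrightarrow \pi$ for every projective plane $\pi$ of order $q$. Finally, I would invoke the second assertion of Remark \ref{remisomsamen}, namely that whenever a graph embeds in $\pi$, so does each of its subgraphs. Combining the inclusion $G \subseteq K_n$ with $K_n \hookrightarrow \pi$ yields $G \hookrightarrow \pi$, which is the desired conclusion.

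There is no genuine obstacle here: the statement is a formal consequence of the complete-graph theorem together with the subgraph-monotonicity of embeddings, and the argument is a one-line chain once the inclusion $G \subseteq K_n$ is in place. The only point requiring any care is taking $n$ to be the \emph{exact} vertex count $v(G)$, so that $G$ really does sit inside $K_n$ and the quantity $n(n-1)/2$ matches the stated bound. I note that the natural route passes through the subgraph remark rather than the oval/arc material; the latter governs when \emph{complete} graphs attain the maximal $q+2$ vertices, whereas here we only need the weaker quadratic threshold guaranteeing an embedding of $K_n$ at all.
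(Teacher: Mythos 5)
Your proof is correct and is exactly the argument the paper intends: the corollary is stated there as immediate, and the implicit reasoning is precisely your chain $G \subseteq K_{v(G)}$, then $K_{v(G)} \hookrightarrow \pi$ by the preceding theorem (whose bound $q \geq n(n-1)/2$ matches the hypothesis), then subgraph-monotonicity of embeddings. You have also, in effect, caught a mis-citation: the paper attributes the corollary to Remark \ref{removals} (the ovals/hyperovals remark), but as you observe, the relevant ingredient is the subgraph statement in Remark \ref{remisomsamen}.
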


It turns out that we can tighten these arguments up quite firmly.

\section{Complete bipartite graphs}\label{sec:CombCBG}

In the case of complete bipartite graphs, we can give a similar result.  We start with a natural construction that yields a complete bipartite graph and, therefore, any of its subgraphs.

\begin{theorem}\label{thmbipartembedd}
Let $G$ be a subgraph of the bipartite graph $K_{n,n}$. Then, $G$ can be embedded in any projective plane $\pi$ of order at least $n$.
\end{theorem}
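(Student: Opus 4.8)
The plan is to reduce immediately to the full graph and then give an explicit line-based construction. Since every subgraph of an embeddable graph is itself embeddable (Remark \ref{remisomsamen}), it suffices to prove that $K_{n,n}\hookrightarrow \pi$ whenever $\pi$ has order $q\geq n$; embeddability of an arbitrary subgraph $G$ of $K_{n,n}$ then follows for free. Write the two classes of $K_{n,n}$ as $X$ and $Y$, each of size $n$. The construction I would use is to fix two distinct lines $\ell_1,\ell_2\in\mathcal L$ meeting at the point $Z=\ell_1\cap \ell_2$, and to place $X$ on $n$ of the $q$ points of $\ell_1\setminus\{Z\}$ and $Y$ on $n$ of the $q$ points of $\ell_2\setminus\{Z\}$. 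This is possible precisely because $q\geq n$, so each of $\ell_1$ and $\ell_2$ has enough points available away from $Z$. Call the resulting map $\phi\colon V\to\mathcal P$; it is injective since all chosen points are distinct.

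Next I would verify that $\phi$ is an embedding in the sense of Definition \ref{defembedding}, i.e.\ that the induced edge map $\overline\phi$ is well defined and injective. For an edge $e=xy$ with $x\in X$, $y\in Y$, the images $\phi(x)\in\ell_1\setminus\{Z\}$ and $\phi(y)\in\ell_2\setminus\{Z\}$ are distinct (they lie on different lines and neither equals $Z$), so $\overline\phi(e)$ is the unique line through these two points and is well defined. For injectivity, suppose two edges $x_1y_1$ and $x_2y_2$ were sent to a common line $m$. Then $m$ contains $\phi(x_1),\phi(x_2)\in\ell_1$ and $\phi(y_1),\phi(y_2)\in\ell_2$. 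If $\phi(x_1)\neq\phi(x_2)$, then $m$ meets $\ell_1$ in two points, forcing $m=\ell_1$; but $\phi(y_1)\in\ell_2\setminus\{Z\}$ does not lie on $\ell_1$, a contradiction. Hence $\phi(x_1)=\phi(x_2)$, and symmetrically $\phi(y_1)=\phi(y_2)$, so the two edges coincide. This establishes that $\overline\phi$ is injective and completes the embedding of $K_{n,n}$.

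The only genuine content is the edge-injectivity argument of the second paragraph, so that is the step I expect to be the main (and really the sole) obstacle; everything else is a matter of bookkeeping once the configuration of two concurrent lines is chosen. The key structural fact being exploited is that two distinct points determine at most one line together with their host lines $\ell_1,\ell_2$, so collisions among edge-lines would force an edge-line to coincide with $\ell_1$ or $\ell_2$, which is impossible by construction. I would note that the bound $q\geq n$ enters exactly once, in guaranteeing enough points on each of $\ell_1\setminus\{Z\}$ and $\ell_2\setminus\{Z\}$, which foreshadows that sharper thresholds (treated in the sequel for general $K_{m,n}$) should come from more economical placements than putting each class entirely on a single line.
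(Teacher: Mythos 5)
Your proof is correct and takes essentially the same approach as the paper: the paper likewise places the two vertex classes on two lines through a common point $\mathcal{O}$ (your $Z$), excluding that point, and maps edges to the unique lines $l_{i,j}$ joining the chosen points. The only differences are cosmetic—you spell out the injectivity of the induced edge map, which the paper asserts in a single sentence, and you invoke Remark \ref{remisomsamen} explicitly for the reduction from a subgraph $G$ to $K_{n,n}$, which the paper handles with its closing clause.
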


\begin{proof}
Let $\pi$ be a projective plane of order at least $n$.\\
We choose a point $\mathcal{O}$ and any two distinct lines in $\pi$, $\ell$ and $m$, both through $\mathcal{O}$.  Let $\{ P_1, P_2, \cdots , P_n\}$ be a set of $n$ distinct points in $\ell \setminus \{\mathcal{O}\}$ and let $\{ Q_1, Q_2, \cdots , Q_n\}$ be a set of $n$ distinct points in $m \setminus \{\mathcal{O}\}$. \\
Let $l_{i,j}$ be the line of $\pi$ incident with $P_i$ and $Q_j$.  Since in a projective plane there is a unique line through any two points, the set $\{ l_{i,j} ; \ i,j = 1, 2, \cdots , n \}$ has exactly $n^2$ elements. It follows that $K_{n,n}$ is embedded in $\pi$, and thus so is $G$.
\end{proof}

Using ideas already discussed in Theorem \ref{thmbipartembedd} we can give a complete characterization for when complete bipartite graphs are embeddable in $\pi$.

\begin{theorem}\label{thmbipartPG2q}
The only complete bipartite graphs embedded in a projective plane $\pi$ of order $q$ are: \\
\textbf{(a)} $K_{n,m}$, where $n,m\leq q$, or \\
\textbf{(b)} $K_{1,q+1}$.
\end{theorem}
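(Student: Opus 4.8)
The plan is to prove the stated characterization in both directions, with essentially all of the work going into showing that nothing outside of (a) and (b) can occur. For the ``only if'' content it is convenient to record first that both listed families do embed. Case (a) is immediate: if $n,m\le q$ then $K_{n,m}$ is a subgraph of $K_{q,q}$, which embeds in $\pi$ by Theorem \ref{thmbipartembedd} (since $\pi$ has order at least $q$), so $K_{n,m}$ embeds as well by Remark \ref{remisomsamen}. For case (b) I would give a direct construction: send the unique vertex of the singleton class to any point $P$, and send the $q+1$ vertices of the other class to one point each, choosing one point (other than $P$) on each of the $q+1$ distinct lines through $P$. These $q+1$ lines are exactly the edge images, so this is an embedding of $K_{1,q+1}$.

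For the converse, which is the heart of the statement, I would start from an embedding $\phi$ of $K_{m,n}$ with classes $X$ of size $m$ and $Y$ of size $n$, assuming $m\ge n$ without loss of generality. Every vertex of $Y$ has degree $m$, so the degree bound $\deg(v)\le q+1$ established earlier forces $m\le q+1$. If $m\le q$ then $n\le m\le q$ and we land in case (a); thus the only configuration left to exclude is $m=q+1$ with $n\ge 2$.

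So suppose $m=q+1$. The key structural observation is that for each $y\in Y$ the $q+1$ edges at $y$ map, injectively via $\overline{\phi}$, to $q+1$ distinct lines through $\phi(y)$; since a point lies on exactly $q+1$ lines, the edges at $y$ are in bijection with \emph{all} lines through $\phi(y)$. Now pick two distinct vertices $y_1,y_2\in Y$ (possible because $n\ge 2$) and let $\ell=\phi(y_1)\phi(y_2)$ be the line they span. As $\ell$ passes through $\phi(y_1)$, it is the image of some edge $y_1x_0$, which forces $\phi(x_0)\in\ell$. But then the \emph{distinct} edge $y_2x_0$ has both endpoints $\phi(y_2)$ and $\phi(x_0)$ lying on $\ell$, so $\overline{\phi}(y_2x_0)=\ell=\overline{\phi}(y_1x_0)$, contradicting injectivity of $\overline{\phi}$. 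Hence $n=1$, placing us in case (b), and the characterization is complete.

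The main (indeed only nontrivial) obstacle is spotting the right test object. Once one realizes that a full-degree vertex, with $\deg=q+1$, consumes every line through its image, the natural move is to feed in the line joining two images from the \emph{same} class: that line is simultaneously forced to be an edge out of each of $\phi(y_1)$ and $\phi(y_2)$ to a common neighbor, and the collision is immediate. The temptation to avoid is replacing this one-line argument with a clumsier point-counting estimate on how many points the edge-lines cover, which does not give the sharp conclusion $n=1$.
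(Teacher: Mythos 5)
Your proof is correct and takes essentially the same approach as the paper: both arguments observe that when one class has size $q+1$, every vertex of the other class has full degree $q+1$, so all lines through its image are edge images, and then the line joining the images of two vertices from that same class forces two distinct edges onto one line, contradicting injectivity of $\overline{\phi}$. In fact your write-up makes explicit the collision step (via the common neighbor $x_0$) that the paper's proof leaves implicit, but the underlying idea and case structure are the same.
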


\begin{proof}
Consider $G=K_{n,q+1}$, for $n>1$, partitioned into independent sets $U$ and $V$ where $|U| =n$ and $|V|=q+1$. Let $\phi : G \hookrightarrow \pi$. \\
Every point  $P=\phi(a)$, where $a \in U$, is incident with exactly $q+1$ lines, each of which is used to form edges in $G$. This uses all the lines through $P$, and thus, there is no line through $P$ that can pass through a second point $Q=\phi(b)$, for $b \in U$. But $P$ and $Q$ must be collinear, yielding a contradiction. So, a bipartite graph can have at most $q$ elements in its independent sets. Theorem \ref{thmbipartembedd} assures that $K_{n,m}$, where $n,m\leq q$, can always be embedded in $\pi$. \\
For the other case, $K_{1,q+1}$ may be embedded in $\pi$ by considering a point and a line not through the
point. Also, for when $q$ is even, an oval and its nucleus yield a different embedding of the same graph.
\end{proof}

We now present results concerning the number of embeddings of complete bipartite graphs into planes. Most of these results will be in extremal cases, and will eventually lead us to consider subplanes in the following section.

Counting the embeddings of $G = K_{n,q}$, for $2\leq n < q$, is very complex, as the different embeddings can be `hosted' by many different configurations of points and lines; too many  to be counted effectively.  However, in specific cases one may get some interesting results.

\begin{lemma}\label{lemK1n}
Let $\pi$ be a projective plane of order $q$, and $1\leq n \leq q+1$. Then,
\[
n_{\pi}(K_{1,n}) =  \binom{q+1}{n}q^{n}(q^2+q+1)
\]
\end{lemma}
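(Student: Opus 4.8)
The plan is to count the total number of embeddings $N_\pi(K_{1,n})$ directly, and then recover $n_\pi(K_{1,n})$ by dividing by $|Aut(K_{1,n})|$ through Theorem \ref{thmn=NAut}. Write $K_{1,n}$ as a star with center $c$ (the vertex of degree $n$) and leaves $v_1,\dots,v_n$, so that an embedding $\phi$ is completely determined by the point $P=\phi(c)$ together with the points $\phi(v_1),\dots,\phi(v_n)$; each edge $cv_i$ is then forced to map to the unique line $\overline{\phi}(cv_i)$ incident with $P$ and $\phi(v_i)$.

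First I would pin down exactly what the two injectivity conditions in Definition \ref{defembedding} demand. Injectivity of $\phi$ says the $n+1$ points $P,\phi(v_1),\dots,\phi(v_n)$ are distinct, while injectivity of $\overline{\phi}$ says the $n$ lines $P\phi(v_i)$ are pairwise distinct, i.e.\ no two leaves are sent to points lying on a common line through $P$. The key observation is that this second condition already forces the leaf-images to be distinct from one another and from $P$: if two leaves coincided, or a leaf equaled $P$, the associated lines through $P$ would collapse. Hence the admissible configurations are exactly given by the data of a center point $P$, a choice of $n$ distinct lines through $P$ (one per leaf), and a choice of one point other than $P$ on each of those lines.

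The count then proceeds in three steps. There are $q^2+q+1$ choices for $P$. Since $q+1$ lines pass through $P$, the number of ways to assign the ordered leaves $v_1,\dots,v_n$ to distinct such lines is $(q+1)q(q-1)\cdots(q+2-n)=\tfrac{(q+1)!}{(q+1-n)!}$, and each chosen line carries $q$ points other than $P$, contributing a factor $q^n$. Multiplying gives
\[
N_\pi(K_{1,n})=(q^2+q+1)\,q^{n}\,\frac{(q+1)!}{(q+1-n)!}=(q^2+q+1)\,q^{n}\,n!\binom{q+1}{n}.
\]
For $n\ge 2$ the center is the unique vertex of maximal degree, so every automorphism of $K_{1,n}$ fixes $c$ and permutes the leaves freely; thus $Aut(K_{1,n})\cong S_n$ and $|Aut(K_{1,n})|=n!$. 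Dividing and applying Theorem \ref{thmn=NAut} yields $n_\pi(K_{1,n})=\binom{q+1}{n}q^{n}(q^2+q+1)$, as claimed.

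I expect the only delicate point to be the translation of the edge-injectivity hypothesis into the clean geometric statement ``the leaves lie on distinct lines through $P$,'' together with the verification that this condition already subsumes vertex-injectivity, so that the two constraints do not interfere and the multiplicative count is legitimate. I would also revisit the boundary case $n=1$ on its own, since there the center is not singled out by degree and $Aut(K_{1,1})$ has order $2$; the automorphism bookkeeping must be handled separately in that case.
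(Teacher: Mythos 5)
Your proposal is correct and rests on the same geometric observation as the paper's proof: an embedding of $K_{1,n}$ is determined by the image $P$ of the center, a choice of $n$ distinct lines through $P$, and a choice of one point other than $P$ on each chosen line. The difference is one of bookkeeping: the paper counts images directly (giving the unordered factor $\binom{q+1}{n}q^n$ together with $q^2+q+1$ choices of $P$), while you count ordered embeddings, $N_\pi(K_{1,n})=(q^2+q+1)\,\frac{(q+1)!}{(q+1-n)!}\,q^n$, and then divide by $|Aut(K_{1,n})|=n!$ via Theorem \ref{thmn=NAut}. Your route buys something real: it forces the automorphism group to be identified explicitly, and your caution about $n=1$ is not idle but pinpoints an actual defect in the lemma as stated. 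For $n=1$ the center is not singled out by degree, so $|Aut(K_{1,1})|=2\neq 1!$; dividing your (correct) count $N_\pi(K_{1,1})=(q^2+q+1)(q^2+q)$ by $2$ gives $n_\pi(K_{1,1})=\binom{q^2+q+1}{2}$, the number of unordered pairs of points, whereas the lemma's formula at $n=1$ evaluates to $(q+1)q(q^2+q+1)$, exactly twice this. The paper's proof tacitly treats the center as distinguishable (``the vertex in the singleton class,'' which is ambiguous when both classes are singletons), so its image count double-counts at $n=1$; your approach detects and localizes this boundary failure, and is airtight for all $n\geq 2$.
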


\begin{proof}
Let $G=K_{1,n}$, and $\phi : G \hookrightarrow \pi$. Let $v\in V(G)$ be the vertex in the singleton class. An embedding of $K_{1,n}$ is uniquely determined by the choice of $\phi(v)$, of which there are $q^2+q+1$ choices, and the choice of exactly one point on $n$ of the $q+1$ lines through $\phi(v)$. Once the $n$ lines to be used in the embedding are chosen, there are exactly $q^n$ ways to choose the $n$ additional points in $\phi(V(G))$.
\end{proof}

\begin{lemma}\label{lemqverticesbipart}
Let $\pi$ be a projective plane of order $q$. Consider $\phi: K_{n,q} \hookrightarrow \pi$, for $2\leq n \leq q$, and let $U$ and $V$, containing $n$ and $q$ vertices respectively, be the two classes of $K_{n,q}$. Then, all points in the embedding of $U$ are collinear.
\end{lemma}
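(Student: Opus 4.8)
The plan is to exploit the tight count of lines through each image point of $U$, together with the fact that the induced map $\overline{\phi}$ on edges is injective. Write $\phi(U) = \{P_1, \ldots, P_n\}$ and $\phi(V) = \{R_1, \ldots, R_q\}$, and label the vertices $u_1, \ldots, u_n$ of $U$ and $v_1, \ldots, v_q$ of $V$ so that $P_i = \phi(u_i)$ and $R_j = \phi(v_j)$. Since every vertex of $U$ is adjacent to every vertex of $V$ in $K_{n,q}$, for each fixed $i$ the lines $P_iR_1, \ldots, P_iR_q$ are all images under $\overline{\phi}$ of the edges $u_iv_1, \ldots, u_iv_q$.

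First I would argue these $q$ lines are distinct: if $P_iR_j = P_iR_{j'}$ for some $j \neq j'$, then the distinct edges $u_iv_j$ and $u_iv_{j'}$ would share an image, contradicting the injectivity of $\overline{\phi}$. Since there are exactly $q+1$ lines through $P_i$ and exactly $q$ of them carry a point of $\phi(V)$, precisely one line through $P_i$ contains no point of $\phi(V)$.

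The key step is to show that no point of $\phi(V)$ lies on a line joining two distinct points of $\phi(U)$. Indeed, suppose $R_j$ lies on the line $P_iP_{i'}$ with $i \neq i'$; then both edges $u_iv_j$ and $u_{i'}v_j$ would have image $P_iP_{i'}$, again violating the injectivity of $\overline{\phi}$. Consequently, each of the chords $P_1P_2, P_1P_3, \ldots, P_1P_n$ is a line through $P_1$ carrying no point of $\phi(V)$. But by the previous paragraph there is only one such line through $P_1$, so all these chords coincide. Hence $P_2, \ldots, P_n$ lie on a single line through $P_1$, and $\phi(U)$ is collinear.

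The main obstacle is recognizing the interplay between the two consequences of injectivity: the counting argument forces a unique ``free'' line through each $P_i$, while the chord argument forces every pair of points of $\phi(U)$ to be joined by such a free line. The hypothesis $|V| = q$ is exactly what makes the free line unique; with fewer than $q$ vertices in $V$ the count would leave more than one empty line through each $P_i$ and the conclusion could fail, which is consistent with the difficulty of the general case $K_{n,q}$ with $n<q$ mentioned before the lemma.
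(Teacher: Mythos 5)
Your proof is correct and follows essentially the same route as the paper's: injectivity of $\overline{\phi}$ forces the $q$ lines from each point of $\phi(U)$ to $\phi(V)$ to be distinct and to avoid the other points of $\phi(U)$, leaving a unique free line through each such point that must absorb all of $\phi(U)$. Your write-up is just a more detailed expansion of the paper's two-sentence argument, with the chord formulation being logically equivalent to the paper's statement that no other element of $\phi(U)$ lies on the $q$ used lines.
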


\begin{proof}
Note that any given $P\in \phi(U)$ is adjacent to the elements in $\phi(V)$ via $q$ of the $q+1$ lines through $P$, forcing that none of the other $n-1$ elements in $\phi(U)$ is incident with any of these lines. Hence, $\phi(U)\subseteq m$, where $m$ is the only line through $P$ that does not intersect $\phi(V)$.
\end{proof}

\begin{theorem}\label{thmbigonecounting}
Let $\pi$ be a projective plane of order $q$. Consider $G = K_{n,q} \hookrightarrow \pi$, for $2\leq n \leq q$, where $V(G) =U\cup V$ with $|U| =n$ and $|V|=q$. Then, \\
\textbf{(a)} For $n=q$ we get:
\[
n_{\pi}(K_{q,q}) = \binom{q^2+q+1}{2}
\]
\textbf{(b)} If $n<q$, and every embedding of $V$ lies on a line, then
\[
n_{\pi}(K_{n,q}) = 2 \binom{q^2+q+1}{2} \binom{q}{n}
\]
\textbf{(c)} If $n=q-1$, then the points in the embedding of $V$ are collinear, and
\[
n_{\pi}(K_{q-1,q})  =  q^2(q+1)(q^2+q+1)
\]
\end{theorem}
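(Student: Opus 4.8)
The plan is to show that in every case both colour classes are forced to lie on lines, which reduces each count to the choice of a (possibly ordered) pair of lines together with a subset of their points. I would first record the common geometric picture. By Lemma \ref{lemqverticesbipart} the image $\phi(U)$ of the size-$n$ class always lies on a line $m$. Suppose in addition that $\phi(V)$ lies on a line $m'$ (this is automatic in (a) by applying the lemma again with the roles of the two classes reversed, it is assumed in (b), and it must be established in (c)). Writing $O = m \cap m'$, one checks that $O \notin \phi(U)$ and $O \notin \phi(V)$: if $\phi(u) = O$ for some $u \in U$, then every edge $uv$ would be forced onto $m'$, since $O$ and all of $\phi(V)$ lie on $m'$, contradicting injectivity of $\overline{\phi}$ as soon as $|V| = q \geq 2$; the argument for $\phi(V)$ is symmetric. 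Conversely, once $\phi(U) \subseteq m \setminus \{O\}$ and $\phi(V) \subseteq m' \setminus \{O\}$, any two points $P \in m \setminus \{O\}$ and $Q \in m' \setminus \{O\}$ determine a line meeting $m$ only at $P$ and $m'$ only at $Q$, so the joining lines are automatically distinct and one always obtains a genuine image. This reduces each part to counting admissible configurations.

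For the counting I would invoke Theorem \ref{thmn=NAut} and count images directly. In the image each point of $\phi(U)$ has degree $q$ and each point of $\phi(V)$ has degree $n$, so the two classes are distinguishable precisely when $n \neq q$. In case (b), with $2 \leq n < q$, the ordered pair $(m, m')$ of distinct lines can be chosen in $(q^2+q+1)(q^2+q) = 2\binom{q^2+q+1}{2}$ ways; the $q$-point set $\phi(V)$ must be all of $m' \setminus \{O\}$, while $\phi(U)$ is any $n$-subset of the $q$ points of $m \setminus \{O\}$, contributing the factor $\binom{q}{n}$. Multiplying gives the stated formula. Case (a), with $n = q$, is the same picture except that the classes have equal size and are interchangeable, so the pair $\{m, m'\}$ is unordered and both subsets are forced to be the full complements $m \setminus \{O\}$ and $m' \setminus \{O\}$; hence images biject with unordered pairs of lines and $n_\pi(K_{q,q}) = \binom{q^2+q+1}{2}$.

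The substantive step is establishing, in case (c) with $n = q-1$, that $\phi(V)$ must be collinear. Here $\phi(U)$ consists of $q-1$ of the $q+1$ points of $m$; let $A$ and $B$ be the two points of $m$ omitted by $\phi(U)$. For distinct $v, v' \in V$ the line $\phi(v)\phi(v')$ meets $m$ in a single point, and that point cannot lie in $\phi(U)$: if it were some $\phi(u)$, then $\phi(v)$ and $\phi(v')$ would share the line through $\phi(u)$, collapsing two distinct edges at $\phi(u)$ onto one line. Hence every such line passes through $A$ or through $B$. Grouping the points of $\phi(V)$ according to which line through $A$, respectively through $B$, they lie on, the condition ``every pair is concurrent at $A$ or at $B$'' forces, by a short case analysis, either all of $\phi(V)$ onto one line through $A$ or all of it onto one line through $B$; either way $\phi(V)$ is collinear. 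This is the step I expect to be the main obstacle, since it is the only place requiring a genuine incidence argument rather than bookkeeping.

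With collinearity in hand, case (c) is counted as in (b), but tracking sizes carefully. The classes are distinguishable because $q-1 \neq q$, so $(m, m')$ is an ordered pair of distinct lines; the set $\phi(V) = m' \setminus \{O\}$ is forced, while $\phi(U)$ omits $O$ and exactly one further point $X$ of $m$, giving $q$ choices. Thus images biject with triples $(m, m', X)$ where $X \in m \setminus \{O\}$, of which there are $(q^2+q+1)(q^2+q)\cdot q = q^2(q+1)(q^2+q+1)$, the claimed value. As a consistency check, one may instead count embeddings and divide by $|Aut(K_{q-1,q})| = (q-1)!\,q!$, obtaining the same answer.
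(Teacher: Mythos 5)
Your proposal is correct, and for parts (a) and (b) it is essentially the paper's argument: reduce to Lemma \ref{lemqverticesbipart}, note the intersection point $O$ is unusable, and count images as (ordered or unordered, according to whether $n\neq q$) pairs of lines together with a choice of points. Where you genuinely diverge is the collinearity step in part (c). The paper proves it by an inclusion--exclusion point count: fixing $P,Q\in\phi(V)$, the edge-lines from $P$ and $Q$ to $\phi(U)$ cover exactly $2(q(q-1)+1)-(q-1)^2=q^2+1$ points, and together with the two unused points of the line carrying $\phi(U)$ this leaves only the $q-2$ points of $PQ$ off that line available, forcing $\phi(V)\subseteq PQ$. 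You instead observe that every joining line of two points of $\phi(V)$ must pass through one of the two omitted points $A,B$ of $m$, and then run a concurrence case analysis; that analysis, which you left compressed, does close: if $P,Q,R\in\phi(V)$ were not collinear, say $PQ$ passes through $A$, then $PR$ and $QR$ cannot pass through $A$ (else they would equal $PQ$), so both pass through $B$, whence $PR=QR$ (both contain $R$ and $B$), a contradiction; so any three points are collinear and hence all of $\phi(V)$ is. Your route is purely incidence-theoretic and is in fact the specialization to two omitted points of the technique the paper deploys later, in Theorem \ref{thmGlayontwolines}, whereas the paper's count for (c) is shorter but more opaque and does not generalize as cleanly. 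One last remark: your argument needs $q\geq 3$ so that Lemma \ref{lemqverticesbipart} applies to $U$ (i.e., $|U|=q-1\geq 2$), which is exactly what the hypothesis $2\leq n$ guarantees; the paper's aside about $q=2$ concerns a case outside that hypothesis, handled via Lemma \ref{lemK1n}, so you lose nothing by omitting it.
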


\begin{proof}
Let $\phi : G \hookrightarrow \pi$, $2\leq n \leq q$, and $U, V$ as described above.\\
\textbf{(a)} Lemma \ref{lemqverticesbipart} implies that the $2q$ points in the embedding of $G$ must lie on two lines. It follows that the number of embeddings of $G$ into $\pi$ is given by the number of pairs of intersecting lines in $\pi$.\\
\textbf{(b)} We first choose two lines of $\pi$, and then choose one of them to be the one containing $\phi(U)$. The result follows from then choosing $n$ points, from the $q$ available points on this line, to create $\phi(U)$. \\
\textbf{(c)}  First notice that the result is valid for $q=2$ (and consistent with Lemma \ref{lemK1n}). For $q>2$, the value of $n_{\pi}(K_{q-1,q})$ will follow from \textbf{(b)}, as soon as we prove that the points of $\phi(G)$ lie on two lines.\\
Consider $P,Q\in \phi(V)$, and note that the remaining $q-2$ points of $\phi(V)$ cannot be on lines that connect $P$ or $Q$ with the $q-1$ points of $\phi(U)$. By inclusion/exclusion, the number of distinct points on these lines is exactly
\[
2(q(q-1)+1)-(q-1)^2 = q^2+1
\]
Also, the two unused points on the line containing $\phi(U)$ cannot be used, as this line passes through two points that are not neighbors in $G$. So, the remaining $q-2$ points of $\phi(V)$ have to be located among the
\[
(q^2+q+1) - (q^2+1)-2 = q- 2
\]
points remaining in $\pi$. These are exactly the points on $PQ$, different from $P$ and $Q$, not intersecting the line containing $\phi(U)$.
\end{proof}

In the proofs of the previous two results we have used lines connecting points in the same maximal independent set of the embedding of a complete bipartite graph. We want to generalize this idea, as it will help us to obtain several interesting results. Hence, we make the following definition:

\begin{definition}
Let $\pi$ be a projective plane and $\phi : G \hookrightarrow \pi$. If $P=\phi(v)$, $Q=\phi(w)$, where $v, w\in V(G)$ and $vw \notin E(G)$, then we will say that the line $PQ$ is a complement line of the embedding.
\end{definition}

We easily get the following two results. They will help us have a better understanding about the set of all complement lines of a complete bipartite graph.

\begin{lemma}\label{lemcomplementdisjoint}
Let $\phi : K_{m,n} \hookrightarrow \pi$ be an embedding of $K_{m,n}$ into plane $\pi$. Then the set of complement lines is disjoint from the set of embedded edges.
\end{lemma}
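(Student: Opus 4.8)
The plan is to argue by contradiction, exploiting the injectivity of the induced edge map $\overline{\phi}$ together with the fact that two distinct points of a projective plane lie on a unique line. Write the two classes of $K_{m,n}$ as $U$ and $V$, so that every edge of $K_{m,n}$ joins a vertex of $U$ to a vertex of $V$, while every non-edge joins two vertices of a common class. Suppose, for contradiction, that some line $\ell$ of $\pi$ is simultaneously an embedded edge and a complement line.

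Being an embedded edge means $\ell = \overline{\phi}(uv)$ for some $u \in U$ and $v \in V$; in particular $\phi(u)$ and $\phi(v)$ both lie on $\ell$. Being a complement line means $\ell = \phi(x)\phi(y)$ for two distinct vertices $x,y$ lying in a common class; by symmetry (interchanging the roles of $U$ and $V$, and of $u$ and $v$) we may assume $x,y \in U$, so that $\phi(x)$ and $\phi(y)$ also lie on $\ell$.

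Here is the key step, and the one place where completeness of $K_{m,n}$ is essential: since $x \in U$ and $v \in V$, the pair $xv$ is an edge of $K_{m,n}$, and $\phi$ being injective gives $\phi(x) \neq \phi(v)$. The image $\overline{\phi}(xv)$ is the unique line of $\pi$ through $\phi(x)$ and $\phi(v)$; as both of these points lie on $\ell$, uniqueness forces $\overline{\phi}(xv) = \ell$. The identical argument applied to $y$ gives $\overline{\phi}(yv) = \ell$. Hence $\overline{\phi}(xv) = \overline{\phi}(yv)$, and injectivity of $\overline{\phi}$ yields $xv = yv$, i.e. $x = y$, contradicting $x \neq y$.

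I expect the only real subtlety to be bookkeeping rather than depth: one must remember to invoke completeness of $K_{m,n}$ to guarantee that the auxiliary pairs $xv$ and $yv$ are genuine edges (so that $\overline{\phi}$ applies to them), and to check that the relevant points are distinct before appealing to the uniqueness of lines. Equivalently, the argument shows that an embedded-edge line meets $\phi(U)$ in exactly one point and $\phi(V)$ in exactly one point, whereas a complement line meets one of these two sets in at least two points; these two descriptions are incompatible, which is precisely the disjointness claimed.
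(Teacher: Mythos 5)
Your proof is correct and follows essentially the same route as the paper's: use completeness of $K_{m,n}$ to produce an auxiliary edge whose endpoint images already lie on the offending line, then contradict the injectivity of $\overline{\phi}$. The only difference is bookkeeping — the paper takes the non-edge to share the vertex $u$ with the edge $uv$ and compares $uv$ with $u_0v$, while you handle an arbitrary non-edge $xy$ and compare $xv$ with $yv$, which is if anything slightly more careful about generality.
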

\begin{proof}
Let $U$ and $V$ be the maximal independent sets of $K_{m,n}$, and suppose $uv\in E(K_{m,n})$ is embedded to a complement line $\ell=\phi(uu_0)$, where $u_0\in U$. Then the edge $u_0v$ is embedded to $\ell$. Thus two edges are embedded to the same line which is a contradiction.
\end{proof}

\begin{lemma}\label{lemcomplementlinear}
Let $\phi : K_{m,n} \hookrightarrow \pi$ be an embedding of $K_{m,n}$ with $V(K_{m,n})=U\cup V$. Then $\phi(U)$ together with the set of complement lines of $\phi(U)$ form a linear space under the incidence relation of $\pi$.
\end{lemma}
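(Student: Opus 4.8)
The statement to prove is Lemma~\ref{lemcomplementlinear}: that $\phi(U)$ together with its complement lines forms a linear space. Recall a \emph{linear space} is a set of points together with a collection of lines such that every two distinct points lie on exactly one common line of the collection. So the task is really to verify this unique-joinability condition internal to $\phi(U)$.

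\medskip

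The plan is to proceed as follows. First I would fix notation: let $U$ be one class of $K_{m,n}$ with image $\phi(U) = \{P_1,\dots,P_k\}$ (where $k=|U|$), and take as the line set of the candidate linear space the collection of complement lines $PQ$ with $P,Q\in\phi(U)$, $P\neq Q$. Since the elements of $U$ are pairwise non-adjacent in $K_{m,n}$ (they lie in a single independent set), \emph{every} pair $P,Q\in\phi(U)$ corresponds to a non-edge $vw\notin E(K_{m,n})$, so the line $PQ$ is by definition a complement line. The key structural input is that $\pi$ is a projective plane, so through any two distinct points $P,Q\in\phi(U)$ there passes a \emph{unique} line of $\pi$; restricting to $\phi(U)$, this line meets $\phi(U)$ in a well-defined subset. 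I would then define the lines of the linear space to be these intersections $(PQ)\cap\phi(U)$ for $P,Q\in\phi(U)$.

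\medskip

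The heart of the argument is verifying the defining axiom of a linear space, namely that any two distinct points of $\phi(U)$ lie on exactly one line of the collection. Existence is immediate: given $P,Q\in\phi(U)$, the complement line $PQ$ passes through both. Uniqueness follows from uniqueness of the joining line in the ambient projective plane (axiom~(a) of the plane definition): if two complement lines $\ell_1,\ell_2$ both contained $P$ and $Q$, they would be two lines through the same pair of points of $\pi$, forcing $\ell_1=\ell_2$. One should also remark that $\phi$ is injective, so distinct vertices of $U$ give distinct points, and that we have at least two points whenever $|U|\geq 2$ (the degenerate case $|U|\leq 1$ gives a trivial linear space with no lines, which I would dispose of in one sentence).

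\medskip

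I expect the main obstacle to be less a matter of hard mathematics than of correctly pinning down the definition of ``linear space'' being used and making sure the chosen line set is the intended one --- in particular whether a line of the abstract linear space is the full projective line $PQ$ or only its trace $(PQ)\cap\phi(U)$ on the point set. Taking the trace is the natural choice that makes $\phi(U)$ a genuine point set closed under the incidence. A subtlety worth flagging is that three points of $\phi(U)$ may happen to be collinear in $\pi$; this is perfectly consistent with the linear space structure (they simply lie on a common line of the collection, with that single line serving all three pairs), and indeed invoking Lemma~\ref{lemcomplementdisjoint} confirms that no complement line coincides with an embedded edge, so the construction is internally consistent. The entire proof should therefore reduce to citing axiom~(a) for uniqueness of joins and the definition of complement line for existence.
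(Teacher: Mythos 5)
Your proposal is correct and follows essentially the same route as the paper's own (very brief) proof: existence of a joining line comes from the fact that every pair in $\phi(U)$ is the image of a non-edge and hence spans a complement line, while uniqueness is exactly the projective plane axiom that two points lie on at most one line. Your additional remarks (injectivity of $\phi$, the degenerate case $|U|\leq 1$, and whether a line means the full line of $\pi$ or its trace on $\phi(U)$) are reasonable bookkeeping that the paper leaves implicit.
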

\begin{proof}
By definition, every complement line must contain at least two points of $\phi(U)$. Since we embed into a projective plane, every pair of points may have at most one line through them.
\end{proof}

Theorem \ref{thmbigonecounting} \textbf{(b)} can be used to obtain $n_{\pi}(K_{q-n,q})$, as long as we know
that all embeddings of $K_{q-n,q}$ lay on two lines. Our next theorem provides conditions for this to happen.

\begin{theorem}\label{thmGlayontwolines}
Let $\pi$ be a projective plane of order $q$. Consider $G = K_{q-n,q} \hookrightarrow \pi$, for $2\leq q-n
\leq q$, where $V(G) =U\cup V$ with $|U| =q-n$ and $|V|=q$. If $q>n^2$, then the vertices of the embedding of
$G$ lay on two lines.
\end{theorem}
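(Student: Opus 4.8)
The plan is to first pin down one of the two lines using the machinery already developed, and then to prove that the large class $\phi(V)$ is forced onto a single line by a combinatorial argument about the secants of $\phi(V)$.

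First I would apply Lemma \ref{lemqverticesbipart} to the embedding: since $V$ is a class of size exactly $q$ and every vertex of $U$ is adjacent to all of $V$, each point of $\phi(U)$ spends $q$ of its $q+1$ pencil lines joining it to $\phi(V)$, so $\phi(U)$ lies on the single remaining line $\ell$ through any such point. As $|U| = q-n \ge 2$, this gives $\phi(U) \subseteq \ell$, and I would record the $n+1$ ``free'' points $X_1, \dots, X_{n+1}$ of $\ell \setminus \phi(U)$. Two quick observations set up the main argument: no point of $\phi(V)$ lies on $\ell$ (otherwise two distinct edges from $\phi(U)$ would both be forced onto $\ell$, contradicting injectivity of $\overline\phi$), and every complement line of $\phi(V)$ meets $\ell$ in one of the $X_i$ rather than in $\phi(U)$ (a line through a point of $\phi(U)$ other than $\ell$ carries exactly one point of $\phi(V)$). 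Thus every secant of $\phi(V)$ passes through one of only $n+1$ collinear points.

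Next, arguing by contradiction, I would suppose $\phi(V)$ is not collinear, fix a point $R \in \phi(V)$, and note that the $n+1$ lines $RX_1, \dots, RX_{n+1}$ cover $\phi(V)$, since every other point $R'$ lies on $RX_i$ where $X_i = RR' \cap \ell$. Writing $a_i = |\phi(V) \cap RX_i| - 1$ for the number of extra points of $\phi(V)$ on $RX_i$, one has $\sum_{i=1}^{n+1} a_i = q-1$, and collinearity of $\phi(V)$ is exactly the statement that only one $a_i$ is nonzero.

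The crux, and the step I expect to be the main obstacle, is a sharp per-line bound: if two of these lines, $RX_i$ and $RX_j$, both carry extra points, then in fact $a_j \le n-1$. To see this I would fix a point $A \ne R$ on $RX_i$; for each point $B \ne R$ of $\phi(V)$ on $RX_j$, the secant $AB$ must pass through some $X_m$ with $m \notin \{i,j\}$ (it cannot pass through $X_i$ or $X_j$, else $B$ or $A$ would collapse onto the wrong line), and all such $B$ sharing a given $X_m$ lie on the single line $AX_m$, which meets $RX_j$ in one point. Hence each of the $n-1$ admissible indices $m$ accounts for at most one $B$, giving $a_j \le n-1$. Finally, if $\phi(V)$ were not collinear there would be at least two nonempty lines, so each nonempty line would have a partner and thus satisfy $a_i \le n-1$; summing over the at most $n+1$ nonempty lines yields $q - 1 = \sum a_i \le (n+1)(n-1) = n^2 - 1$, i.e. $q \le n^2$, contradicting $q > n^2$. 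Therefore $\phi(V)$ is collinear, and together with $\phi(U) \subseteq \ell$ the whole embedding lies on two lines. It is worth noting that this sharp pairing bound, rather than the naive estimate that each secant carries at most $n+1$ points of $\phi(V)$, is precisely what makes the hypothesis $q > n^2$ (and not merely $q > n^2+n$) suffice.
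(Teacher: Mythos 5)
Your proof is correct. It shares its foundations with the paper's proof --- Lemma \ref{lemqverticesbipart} forces $\phi(U)$ onto a line $\ell$, and every secant of $\phi(V)$ must meet $\ell$ in one of the $n+1$ points of $\ell\setminus\phi(U)$ (your $X_i$, the paper's $A_i$) --- but the final count is organized by a different decomposition. The paper fixes \emph{two} points $P,Q\in\phi(V)$ and splits $\phi(V)$ relative to the secant $PQ$: the points off $PQ$ lie among the $n(n-1)$ intersections $PA_i\cap QA_j$ with $i\neq j$, while (given a point $R$ off $PQ$) the points on $PQ$ number at most $n$, so $q\le n(n-1)+n=n^2$. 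You instead fix \emph{one} point $R$, decompose $\phi(V)\setminus\{R\}$ over the pencil of $n+1$ lines $RX_i$, and prove the uniform bound $a_j\le n-1$ whenever two pencil lines are occupied; the proof of that bound (projecting from an auxiliary $A\in\phi(V)$ on another pencil line, with distinct points $B$ forcing distinct indices $m\notin\{i,j\}$) is the same mechanism the paper uses to bound the points on $PQ$, here applied symmetrically to every line of the pencil rather than to one distinguished secant. Summing gives $q-1\le(n+1)(n-1)$, i.e.\ the same sharp threshold $q\le n^2$, which is exactly what is needed since Remark \ref{remBaerembedding} shows the hypothesis $q>n^2$ cannot be weakened. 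Your organization buys a cleaner, more uniform argument (one per-line lemma applied $n+1$ times, no separate grid count), at the small cost of the ``partner'' bookkeeping in the last step; the paper's version is shorter on the page but hides the conditional nature of its claim that complement lines through $P$ carry at most $n$ points of $\phi(V)$.
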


\begin{proof}
Lemma \ref{lemqverticesbipart} tells us that $|V|=q$ forces the existence of a line $\ell$ such that
$\phi(U)\subseteq \ell$. We let $A_1,A_2,\ldots , A_{n+1}$ be the $n+1$ points on $\ell \setminus \phi(U)$.

Let $P$ and $Q$ be two distinct points in $\phi(V)$. Since $PQ$ intersects $\ell$ we assume, without loss of
generality, that $A_{n+1} \in PQ$.  It follows that the elements of $\phi(V)$ must lay on $PQ$, or be one of
the points of intersection of the lines $PA_i$, and $QA_j$, for $1\leq i, j \leq n$. However, $A_i \notin
\phi(V)$, for all $1\leq i \leq n$, and thus at most $n(n-1)$ vertices of $\phi(V)$ are not on $PQ$. Note that
every complement line through $P$ must contain at most $n$ points of $\phi(V)$.

Now assume there is a point $R \in \phi(V)$ not collinear with $P$ and $Q$. The argument used above (now with $R$ instead of $Q$) forces $PQ$ to contain at most $n$ points of $\phi(V)$. Hence, $q=|\phi(V)|\leq n(n-1)+n=n^2$, which contradicts our hypothesis. So, the points of $\phi(V)$ must be collinear.
\end{proof}

\begin{remark}\label{remBaerembedding}
The bound $q>n^2$ in Theorem \ref{thmGlayontwolines} is sharp. For example, if $n=q-2$ and $q\geq 5$, then we get
\[
n_{\pi}(K_{q-2,q})  =  \frac{q^2(q^2-1)(q^2+q+1)}{2}
\]
But if $q=3$ we get  $n_{\pi}(K_{1,3}) = 13\cdot 4 \cdot 3^3=468$, and for $q=4$ we obtain $n_{\pi}(K_{2,4}) =
21 \cdot \binom{5}{2}\cdot 4!=5040$. These two values are not consistent with what Theorem
\ref{thmGlayontwolines} would have given us.

Moreover, if $q=n^2$ and $\pi$ has a Baer subplane $\mathcal{B}$ then we consider a line $\ell$ of $\mathcal{B}$ ($\ell$ is also seen as a `longer' line in $\pi$). We let $\phi(U)$ to be the points on $\ell \setminus \mathcal{B}$, and $\phi(V)$ to be the points on $\mathcal{B}\setminus \ell$. This selection of $\phi(U)$ and $\phi(V)$ yields an embedding of $K_{q-n,q}$ into $\pi$, with $q=n^2$, not having the points on $\phi(V)$ laying on a line.
\end{remark}

We will continue studying the interplay between subplanes and embeddings of complete bipartite graphs in the following section.

\section{Subplanes}\label{secsubplanes}

Note that the set of points in the embedding of a class of $K_{m,n}$ together with all its complement lines forms a partial plane. We are interested in studying whether this structure could ever be a subplane. Our hopes are, firstly, based on the following remark.

\begin{remark}\label{remsetupsubplane}
Consider $\pi$ a projective plane of order $q$, and assume $G=K_{m,n} \hookrightarrow \pi$, where $n=s^2+s+1$ and $m=q-s$, for some $s\in \mathbb{N}$. \\
There are at most $q+1-(q-s)=s+1$ complement lines per point in the maximal independent set of size $s^2+s+1$.
\end{remark}

\begin{theorem}[Erd\H os--De Brujin \cite{EdB48}]\label{Erdos}
Let $S=(P,L)$ be a finite linear space with $|P|=v$ and $|L|=b>1$. Then \\
\textbf{(a)}  $b\geq v$. \\
\textbf{(b)}  if $b=v$, any two lines of $S$ intersect at a point in $S$.\\
In case \textbf{(b)}, either one line has $v-1$ points and all others have two points, or $S$ is a finite projective plane.
\end{theorem}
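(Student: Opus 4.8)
The plan is to prove the Erd\H os--De Bruijn theorem by a double-counting argument on point-line incidences, combined with a careful analysis of the equality case.

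First I would set up the basic counting machinery. For each point $p\in P$ let $r_p$ denote the number of lines of $S$ through $p$, and for each line $\ell\in L$ let $k_\ell$ denote the number of points on $\ell$. The defining property of a linear space is that any two distinct points lie on exactly one common line, which immediately gives the identity $\sum_{\ell\ni p} (k_\ell - 1) = v - 1$ for every point $p$, since each of the other $v-1$ points is joined to $p$ by a unique line. The key inequality to extract is that if a point $p$ does not lie on a line $\ell$, then $r_p \geq k_\ell$: indeed, each of the $k_\ell$ points of $\ell$ determines a distinct line through $p$ (distinct because two of them would force $\ell$ itself to pass through $p$). To prove part \textbf{(a)}, I would assume $b < v$ for contradiction and exploit this via the weighted sum
\[
\sum_{p\in P}\sum_{\ell\not\ni p} \frac{1}{b - k_\ell} \;\geq\; \sum_{p\in P}\sum_{\ell\not\ni p}\frac{1}{v - r_p},
\]
using $r_p\geq k_\ell$ together with the assumption $b<v$ to justify that the denominators are positive and the inequality flips correctly. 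Reorganizing each side by summing over incident versus non-incident pairs and using $\sum_\ell k_\ell = \sum_p r_p$ (both count total incidences), the two sides turn out to be forced equal, which contradicts the strictness coming from $b<v$. This is the classical Motzkin-style argument.

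For part \textbf{(b)}, assuming $b = v$, I would show first that any two lines meet. If two lines $\ell_1,\ell_2$ were disjoint, I would derive a contradiction from the incidence counts: a point off a line $\ell$ satisfies $r_p \geq k_\ell$, and with $b=v$ one can show the inequalities in the part \textbf{(a)} argument must all be equalities, which forces $r_p = k_\ell$ whenever $p\notin\ell$, and from this rigidity deduce that disjoint lines cannot occur. Once every two lines intersect, the final dichotomy follows by analyzing a line $\ell$ of maximum size $k$: every other point lies off $\ell$ (or on it), and the meeting condition together with the equality $b=v$ pins down the possibilities. If some line has $v-1$ points, the remaining single point forces all other lines to be two-point lines (a near-pencil). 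Otherwise every line has at least three points and every two lines meet in exactly one point, and one checks the projective plane axioms of the Definition directly: uniqueness of the join is the linear-space axiom, uniqueness of the meet comes from $b=v$ with the intersection property, and the nondegeneracy axiom \textbf{(d)} follows from ruling out the near-pencil case.

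The main obstacle I expect is the equality-case rigidity in part \textbf{(b)}: extracting that all the incidence inequalities become equalities when $b=v$, and then leveraging this to simultaneously rule out disjoint lines and force the clean structural dichotomy. The part \textbf{(a)} counting is essentially a clean application of a known inequality once the identity $r_p\geq k_\ell$ is in hand, but converting the borderline $b=v$ case into the precise statement that the space is either a near-pencil or a projective plane requires handling the regularity of the parameters carefully, and it is here that one must be most attentive to degenerate configurations (for instance a single line covering all points, which is excluded by the hypothesis $b>1$).
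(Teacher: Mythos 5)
You should first note that the paper does not prove this statement at all: it is the classical de Bruijn--Erd\H os theorem, quoted with the citation \cite{EdB48} and used as a black box (in Lemma \ref{BigLemma}), so your proposal must stand entirely on its own. Unfortunately, its central computation does not work. The heart of your part \textbf{(a)} is the displayed comparison of $\frac{1}{b-k_\ell}$ with $\frac{1}{v-r_p}$ over non-incident pairs $(p,\ell)$; this is a misremembered form of the Motzkin/Conway double count, and it fails on three counts. First, positivity: under the contradiction hypothesis $b<v$, the denominator $b-k_\ell$ need not be positive (a line may have as many as $v-1\geq b$ points), so assuming $b<v$ makes positivity harder, not easier. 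Second, the termwise comparison $\frac{1}{b-k_\ell}\geq\frac{1}{v-r_p}$ amounts to $r_p-k_\ell\leq v-b$, which simply does not follow from $r_p\geq k_\ell$ and $b<v$. Third, the punchline: after reorganizing, your two sides are $\sum_\ell \frac{v-k_\ell}{b-k_\ell}$ and $\sum_p\frac{b-r_p}{v-r_p}$, and these are not equal in general; the identity $\sum_\ell k_\ell=\sum_p r_p$ does not force them to coincide. The correct argument attaches the \emph{mixed} weights: one compares $\frac{1}{v(b-r_p)}$ with $\frac{1}{b(v-k_\ell)}$. Then each double sum over non-incident pairs telescopes exactly to $1$ (for fixed $p$ there are $b-r_p$ lines missing $p$, so $\sum_{\ell\not\ni p}\frac{1}{v(b-r_p)}=\frac{1}{v}$, and dually), the termwise inequality $v(b-r_p)\leq b(v-k_\ell)$ is equivalent to $vr_p\geq bk_\ell$, which genuinely follows from $v\geq b$ and $r_p\geq k_\ell$, and strictness when $b<v$ yields the contradiction $1>1$. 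One also needs $b-r_p>0$, i.e.\ no point lies on every line; this is where the hypothesis $b>1$ enters.

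Part \textbf{(b)} inherits the damage: the rigidity you invoke ($r_p=k_\ell$ for every non-incident pair when $b=v$) is exactly the equality case of the correct double count, so it is not available from your version of the inequality. Granting it, your exclusion of disjoint lines can be made to work: if $\ell_1\cap\ell_2=\emptyset$ and $p\in\ell_1$, then $p$ lies on the $k_{\ell_2}$ distinct joining lines and also on $\ell_1$, so $r_p\geq k_{\ell_2}+1$, contradicting $r_p=k_{\ell_2}$. But the final dichotomy is not finished: your sentence ``otherwise every line has at least three points'' is precisely the claim that must be proved, not assumed. One must show that if $b=v$, all lines pairwise meet, and some line $\{x,y\}$ has only two points, then $S$ is a near-pencil. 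For instance: every other line passes through exactly one of $x,y$; if there are $s$ further lines through $x$ and $t$ through $y$, then $b=1+s+t$, while the points off $\{x,y\}$ are exactly the $st$ pairwise intersections, so $v-2=st$; the hypothesis $b=v$ gives $st=s+t-1$, hence $(s-1)(t-1)=0$, and the single line on the degenerate side contains $v-1$ points. Finally, to conclude ``projective plane'' in the sense of the paper's definition you must also exhibit a quadrangle (axiom \textbf{(d)}); this is easy once all lines have at least three points, but it is a step your sketch waves at rather than performs.
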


The following lemma will help us in the study of these potential subplanes.

\begin{lemma}\label{BigLemma}
Let $\pi$ be a finite projective plane of order $q$, and $A=(\mathcal P,\mathcal L,\mathcal I)$ a set of points and lines of $\pi$ satisfying the axioms of a linear space. Assume that there is an integer $1<n\leq q$ such  that no point of $A$ is incident with more than $n+1$ lines in $A$. Then, \\
\textbf{(a)} If $|\mathcal P|= n^2+n+1$, then either $|\mathcal L| =1$, or  $A$ is a subplane of $\pi$. \\
\textbf{(b)}  If $|\mathcal P|>n^2+n+1$, then $|\mathcal L|=1$.
\end{lemma}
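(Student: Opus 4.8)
The plan is to transfer the hypothesis on point–degrees into a bound on line–sizes, and then to compare the resulting parameters against Theorem~\ref{Erdos}. Throughout write $v=|\mathcal P|$ and $b=|\mathcal L|$; let $r_P$ denote the number of lines of $A$ through a point $P$, and let $k_\ell$ denote the number of points of $\mathcal P$ on a line $\ell$. The hypothesis reads $r_P\le n+1$ for every $P$, while the linear–space axioms give $k_\ell\ge 2$ for every $\ell$ together with a unique line of $A$ through each pair of points of $\mathcal P$.

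The key fact I would establish first is a point–line inequality: if $\ell\in\mathcal L$ and $P\in\mathcal P$ with $P$ not on $\ell$, then $r_P\ge k_\ell$. Joining $P$ to each of the $k_\ell$ points of $\ell$ produces $k_\ell$ lines of $A$ through $P$, and these are pairwise distinct, since two of them coinciding would force that common line to contain two points of $\ell$, hence to equal $\ell$, contradicting $P\notin\ell$. Next I would observe that whenever $b>1$ no line can contain all of $\mathcal P$: a second line would meet the first in at least two of its points and so coincide with it. Thus, when $b>1$, every line $\ell$ has some point off it, and the inequality together with $r_P\le n+1$ yields $k_\ell\le n+1$ for \emph{every} line.

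With every line of size at most $n+1$ in hand, I would fix any point $P$ and partition the other $v-1$ points according to the lines of $A$ through $P$, obtaining
\[
v-1=\sum_{\ell\ni P}(k_\ell-1)\le r_P\cdot n\le (n+1)n=n^2+n,
\]
so $v\le n^2+n+1$ whenever $b>1$. This already settles part \textbf{(b)}: if $v>n^2+n+1$, then $b>1$ is impossible, forcing $b=1$.

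For part \textbf{(a)}, suppose $v=n^2+n+1$ and $b>1$. The displayed chain must then hold with equality, forcing $r_P=n+1$ and $k_\ell=n+1$ for the chosen $P$ and every line through it; as $P$ was arbitrary, $r_P=n+1$ for all points and $k_\ell=n+1$ for all lines. Counting incidences two ways gives $b(n+1)=\sum_\ell k_\ell=\sum_P r_P=v(n+1)$, so $b=v=n^2+n+1$. Now the Erd\H{o}s--De Bruijn theorem (Theorem~\ref{Erdos}\textbf{(b)}) applies: since $b=v$, either one line carries $v-1$ points and all others carry two, or $A$ is a projective plane. The former is impossible, because every line has exactly $n+1$ points and $n+1\neq v-1=n^2+n$ when $n>1$. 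Hence $A$ is a projective plane; as its points, lines, and incidence are inherited from $\pi$, it is a subplane of $\pi$ (necessarily of order $n$). I expect the only genuine obstacle to be the opening point–line inequality and the observation that, once $b>1$, it upgrades the degree bound $r_P\le n+1$ into the size bound $k_\ell\le n+1$; after that, the extremal analysis and the appeal to Erd\H{o}s--De Bruijn are routine.
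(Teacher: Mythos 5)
Your proof is correct, and while its skeleton is the same as the paper's (reduce to the Erd\H{o}s--De Bruijn theorem and then exclude the near-pencil configuration), your version supplies intermediate steps that the paper omits and that are in fact needed. The paper's proof of \textbf{(a)} invokes Theorem~\ref{Erdos} to conclude immediately that $A$ is either a projective plane or has one line with $n^2+n$ points and a single point off it, and then kills the latter case by noting that the isolated point would lie on $n^2+n>n+1$ lines. But the dichotomy in Theorem~\ref{Erdos} is stated only under the hypothesis $b=v$, which the paper never verifies; and the paper's proof of \textbf{(b)} is just ``follows from Theorem~\ref{Erdos}\textbf{(a)}'', although the inequality $b\geq v$ alone yields no contradiction with $|\mathcal P|>n^2+n+1$ (the degree bound only gives $2b\le\sum_P r_P\le v(n+1)$, i.e.\ $b\le v(n+1)/2$, which is perfectly compatible with $b\geq v$). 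Your three preliminary observations --- the inequality $r_P\geq k_\ell$ for $P\notin\ell$, its consequence that $b>1$ forces every line of $A$ to carry at most $n+1$ points, and the pencil count $v-1=\sum_{\ell\ni P}(k_\ell-1)\le n(n+1)$ --- are exactly the missing ingredients: they prove $v\le n^2+n+1$ whenever $b>1$, which is part \textbf{(b)} outright, and in the extremal case $v=n^2+n+1$ the forced equalities $r_P=k_\ell=n+1$ give $b=v$ by an incidence count, which is what licenses the appeal to the Erd\H{o}s--De Bruijn dichotomy in part \textbf{(a)}. Your exclusion of the near-pencil (every line has $n+1\neq n^2+n$ points) differs only cosmetically from the paper's (the isolated point would have degree $n^2+n$); both are valid. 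In short, your write-up is the complete version of the argument the paper sketches.
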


\begin{proof}
\textbf{(a)} If $|L|\neq 1$, by Theorem \ref{Erdos}, $A$ is either a finite projective plane or contains a line with $n^2+n$ points, with a single point not on the line. Suppose the latter, then there would be $n^2+n$ lines through the point not on the line, contradicting the assumption that each point may have at most $n+1$ lines of $A$ incident with it. Hence, $A$ is a finite projective plane if $|L|\neq 1$. If $|L|=1$, all points must be collinear, since every two points are collinear in a linear space.\\
\textbf{(b)} Follows from Theorem \ref{Erdos}\textbf{\textit{(a)}}.
\end{proof}

\begin{theorem}\label{SubplaneBound}
Let $\pi$ be a projective plane of order $q$ and let $n>1$ be a natural number. Then, any embedding of $K_{q-n, \ n^2+n+1}$ maps the maximal independent set of size $n^2+n+1$ to either a subplane of order $n$, or to points on a line.
\end{theorem}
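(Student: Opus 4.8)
The plan is to apply Lemma \ref{BigLemma} to the linear space built from the large independent set together with its complement lines. Let $\phi : K_{q-n,\,n^2+n+1} \hookrightarrow \pi$, and let $U$ and $V$ be the two classes with $|U| = q-n$ and $|V| = n^2+n+1$. Set $A = (\mathcal{P}, \mathcal{L}, \mathcal{I})$ where $\mathcal{P} = \phi(V)$ and $\mathcal{L}$ is the set of complement lines of $\phi(V)$, with incidence inherited from $\pi$. By Lemma \ref{lemcomplementlinear}, $A$ is a linear space. The point count is exactly $|\mathcal{P}| = n^2+n+1$, which is precisely the cardinality hypothesis of Lemma \ref{BigLemma}\textbf{(a)}. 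So once the degree bound is verified, that lemma gives the dichotomy: either $|\mathcal{L}| = 1$ (all of $\phi(V)$ collinear) or $A$ is a subplane of $\pi$, and since a subplane on $n^2+n+1 = n^2+n+1$ points has order $n$, this yields exactly the conclusion of the theorem.

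The key step, then, is to establish that no point of $\phi(V)$ lies on more than $n+1$ complement lines of $A$. This is where I would use the bipartite structure directly, in the spirit of Remark \ref{remsetupsubplane}. Fix $P = \phi(w) \in \phi(V)$. Since $w$ is adjacent in $K_{q-n,\,n^2+n+1}$ to every vertex of $U$, the embedding uses $|U| = q-n$ of the $q+1$ lines through $P$ to carry edges $\phi(wu)$ for $u \in U$. By Lemma \ref{lemcomplementdisjoint}, the complement lines are disjoint from the embedded edge lines, so every complement line through $P$ must be one of the remaining lines through $P$. There are exactly
\[
(q+1) - (q-n) = n+1
\]
such lines. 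Hence $P$ lies on at most $n+1$ complement lines of $A$, which is exactly the degree bound required, with $1 < n \le q$ ensured by the hypotheses $n > 1$ and the embeddability constraint $n^2+n+1 \le q$ from Theorem \ref{thmbipartPG2q} forcing $n \le q$.

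With the degree bound in hand, I would invoke Lemma \ref{BigLemma}\textbf{(a)} to conclude. In the case $|\mathcal{L}| = 1$, every pair of points of $\phi(V)$ lies on a single common complement line, so $\phi(V)$ is contained in one line of $\pi$; this is the ``points on a line'' alternative. In the case where $A$ is a subplane, its $n^2+n+1$ points force its order to be $n$, giving a subplane of order $n$. The main obstacle I anticipate is not the counting---that is clean---but rather making sure the edge lines and complement lines through $P$ are genuinely distinct and that every line of $A$ through $P$ really is accounted for among the ``leftover'' $n+1$ lines; this requires citing Lemma \ref{lemcomplementdisjoint} carefully so that no complement line coincides with an embedded edge. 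Once that disjointness is pinned down, the degree bound and the entire argument follow immediately.
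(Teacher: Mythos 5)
Your proposal is correct and follows essentially the same route as the paper: form the linear space on $\phi(V)$ with its complement lines, bound the number of complement lines through each point by $(q+1)-(q-n)=n+1$, and apply Lemma \ref{BigLemma}\textbf{(a)}. The only difference is cosmetic—the paper cites Remark \ref{remsetupsubplane} for the degree bound where you re-derive it via Lemma \ref{lemcomplementdisjoint}, and you make the appeal to Lemma \ref{lemcomplementlinear} explicit where the paper leaves it implicit.
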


\begin{proof}
Note that, implicitly, we are assuming $q>n^2+n$, as this is needed for $K_{q-n, \ n^2+n+1}$ to embed into $\pi$. \\
Let $P$ be an embedded vertex of the maximal independent set of size $n^2+n+1$. Because of Remark \ref{remsetupsubplane}, $P$ is incident with at most $n+1$ complement lines. It follows that the set of embedded vertices of the maximal independent set of size $n^2+n+1$, together with their complement lines and the incidence relation inherited from $\pi$ satisfy the hypothesis of Lemma \ref{BigLemma}. The result follows.
\end{proof}

\begin{corollary}
Let $\pi$ be a projective plane of order $q$ and let $n>1$ be a natural number. Then any embedding of $K_{s,t}$, where $s\geq q-n$, and $t> n^2+n+1$ maps the maximal independent set of size $t$ to points on a line.
\end{corollary}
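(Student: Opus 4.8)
The plan is to reduce this statement to Lemma \ref{BigLemma}\textbf{(b)} in essentially the same way that Theorem \ref{SubplaneBound} reduces it to Lemma \ref{BigLemma}\textbf{(a)}; the only structural difference is that now the relevant independent set is \emph{strictly} larger than $n^2+n+1$, which is what pushes us into case \textbf{(b)} (a single line) rather than case \textbf{(a)} (a subplane or a single line).

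First I would record the ambient constraints forced by the mere existence of an embedding $\phi:K_{s,t}\hookrightarrow\pi$. Since $t>n^2+n+1>1$, the graph is not $K_{1,q+1}$, so Theorem \ref{thmbipartPG2q} gives $t\le q$. Combining $t\le q$ with $t>n^2+n+1$ yields $q>n^2+n$, hence $1<n\le q$, which is exactly the range required to apply Lemma \ref{BigLemma}. Next I would translate the degree hypothesis into a bound on complement lines. Let $W$ be the maximal independent set of size $t$ and fix $P=\phi(w)$ with $w\in W$. In $K_{s,t}$ the vertex $w$ has degree exactly $s$, so $\overline{\phi}$ sends the $s$ edges at $w$ to $s$ distinct lines through $P$, all used as embedded edges. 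Thus at most $(q+1)-s\le (q+1)-(q-n)=n+1$ lines through $P$ remain, and by Lemma \ref{lemcomplementdisjoint} every complement line of $\phi(W)$ through $P$ is one of these unused lines. Hence each point of $\phi(W)$ is incident with at most $n+1$ complement lines.

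Finally I would assemble the linear space and invoke the lemma. By Lemma \ref{lemcomplementlinear}, $\phi(W)$ together with its complement lines forms a linear space $A$, and by the previous paragraph no point of $A$ lies on more than $n+1$ of its lines. Since $|\phi(W)|=t>n^2+n+1$, Lemma \ref{BigLemma}\textbf{(b)} forces $A$ to have a single line; that is, all of $\phi(W)$ is collinear, which is the desired conclusion.

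I expect the only genuinely delicate point to be confirming that the hypotheses of Lemma \ref{BigLemma} are met, specifically that $n\le q$ (obtained above from $t\le q$) and that the condition $s\ge q-n$ truly caps the number of complement lines through each point at $n+1$, rather than merely bounding the total number of unused lines. Once the dictionary between complement lines and linear spaces established earlier is in place, the remainder is a formal citation of Lemma \ref{BigLemma}\textbf{(b)}.
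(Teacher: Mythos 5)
Your proposal is correct in substance but takes a genuinely different route from the paper's. The paper never invokes Lemma \ref{BigLemma} directly for this corollary: instead it restricts the given embedding to two sub-embeddings $G_1$, $G_2$ of $K_{q-n,\,n^2+n+1}$ sharing a $K_{q-n,\,n^2+n}$ (possible precisely because $t>n^2+n+1$ and $s\ge q-n$), applies Theorem \ref{SubplaneBound} to each, and then kills the subplane alternative with a completion argument: if both classes of size $n^2+n+1$ were subplanes of order $n$, they would be two distinct extensions of the shared partial plane on $n^2+n$ points, contradicting the uniqueness of that extension; once one class lies on a line, the other must as well (a subplane of order $n$ cannot contain $n^2+n$ collinear points), and the two lines coincide since they share at least two points. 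You instead inline the mechanism behind Theorem \ref{SubplaneBound} and land in case \textbf{(b)} of Lemma \ref{BigLemma} rather than case \textbf{(a)}: the class of size $t$ with its complement lines is a linear space (Lemma \ref{lemcomplementlinear}), each of whose points lies on at most $(q+1)-s\le n+1$ of its lines (Lemma \ref{lemcomplementdisjoint} plus the degree count), and $|\phi(W)|=t>n^2+n+1$ forces a single line. This is arguably cleaner: it treats all of $\phi(W)$ at once, and avoids both the patching together of overlapping sub-embeddings and the subplane-completion argument.

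One step of yours needs repair. You assert that $t>n^2+n+1>1$ already rules out the embedded graph being $K_{1,q+1}$, and from Theorem \ref{thmbipartPG2q} conclude $t\le q$. That inference is faulty as stated: $K_{1,q+1}$ has a class of size $q+1>1$, so $t>1$ alone does not exclude the possibility $s=1$, $t=q+1$. The exclusion is still easy, but it must use the hypothesis $s\ge q-n$: if $s=1$ and $t=q+1$, then $s\ge q-n$ gives $q\le n+1$, while $t=q+1>n^2+n+1$ gives $q>n^2+n\ge n+2$ for $n\ge 2$, a contradiction. With that patch you indeed get $s,t\le q$, hence $q\ge t>n^2+n+1$, which supplies the condition $1<n\le q$ required by Lemma \ref{BigLemma}, and the rest of your argument goes through as written. (To be fair, the paper needs $q>n^2+n$ too; it absorbs this silently into the implicit hypothesis of Theorem \ref{SubplaneBound}.)
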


\begin{proof}
Since $t> n^2+n+1$, there are at least two embeddings of $K_{q-n, \ n^2+n+1}$ into $\pi$, sharing a  $K_{q-n, \ n^2+n}$. We denote these embeddings $G_1$ and $G_2$. We now apply Theorem \ref{SubplaneBound} for each of these graphs. If the two classes of size $n^2+n+1$ are subplanes we get a contradiction, as the partial plane on $n^2+n$ points (the intersection of the two subplanes) can be extended in a unique way to a projective plane, as the missing point must be the intersection of two lines that are parallel in the subplane. It follows that, without loss of generality, $G_1$ has its independent set of cardinality $n^2+n+1$ contained in a line. Hence, $G_2$ must have the same property, as $G_1$ and $G_2$ share a $K_{q-n, \ n^2+n}$. Moreover, since these lines will share at least two points, they are the same line.
\end{proof}

\begin{corollary}\label{corspecialcase}
Let $\pi$ be a projective plane of order $q$, and let $n, s, t \in \mathbb{N}$ be such that $q=(n+1)^2$, and $q \geq s, t > q-n$. Then,
\[
n_{\pi}(K_{s,t}) = 2 \binom{q^2+q+1}{2} \binom{q}{s} \binom{q}{t}
\]
for $s\neq t$, and
\[
n_{\pi}(K_{s,s}) = \binom{q^2+q+1}{2} \binom{q}{s}^2
\]
\end{corollary}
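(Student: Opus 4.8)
The plan is to reduce every embedding to the two-line configuration already analyzed in Theorem \ref{thmbigonecounting}\textbf{(b)}, and then count images directly. Since $q=(n+1)^2=n^2+2n+1$, we have $q-n=n^2+n+1$, so the hypothesis $q\ge s,t>q-n$ reads $n^2+n+1<s,t\le q$; in particular each of $s$ and $t$ exceeds both $q-n$ and $n^2+n+1$. Name the classes $U$ (size $s$) and $V$ (size $t$). First I would apply the corollary preceding this one to $\phi:K_{s,t}\hookrightarrow\pi$: because $s\ge q-n$ and $t>n^2+n+1$, the class of size $t$ lands on a single line. Then I would apply the same corollary to the isomorphic graph $K_{t,s}$: because $t\ge q-n$ and $s>n^2+n+1$, the class of size $s$ also lands on a line. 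Thus in every embedding $\phi(U)\subseteq\ell_U$ and $\phi(V)\subseteq\ell_V$ for two lines of $\pi$. (For the boundary value $n=1$ one has $q=4$ and the only admissible graph is $K_{4,4}=K_{q,q}$, already covered by Theorem \ref{thmbigonecounting}\textbf{(a)}, and the stated formula agrees there since $\binom{q}{q}=1$.)

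Next I would record two geometric facts. The lines are distinct: if $\phi(U)$ and $\phi(V)$ both lay on one line $\ell$, then for any $P\in\phi(U)$, $Q\in\phi(V)$ the edge $PQ$ would equal $\ell$, collapsing all $st\ge 4$ edges onto $\ell$ and contradicting injectivity of $\overline{\phi}$. Writing $O=\ell_U\cap\ell_V$, no embedded vertex is $O$: if $O\in\phi(U)$ then every edge from $O$ to a vertex of $\phi(V)\subseteq\ell_V$ would be $\ell_V$, again collapsing $t\ge 2$ edges. Hence $\phi(U)$ is an $s$-subset of the $q$ points of $\ell_U\setminus\{O\}$ and $\phi(V)$ a $t$-subset of the $q$ points of $\ell_V\setminus\{O\}$. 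Conversely, any choice of two distinct lines and such subsets gives a genuine embedding: a line other than $\ell_U$ meets $\ell_U$ in a single point, so $P_iQ_j=P_kQ_\ell$ forces $P_i=P_k$ and $Q_j=Q_\ell$, i.e. $\overline{\phi}$ is injective.

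Finally I would count images, which by Theorem \ref{thmn=NAut} is exactly $n_\pi(K_{s,t})$. By the above, an image is recorded by assigning to each class a line together with a point-subset of that line avoiding $O$. When $s\ne t$ the two classes are distinguishable by size, so the assignment is ordered: there are $(q^2+q+1)(q^2+q)=2\binom{q^2+q+1}{2}$ ordered pairs of distinct lines, times $\binom{q}{s}$ choices on the $U$-line and $\binom{q}{t}$ on the $V$-line, yielding $2\binom{q^2+q+1}{2}\binom{q}{s}\binom{q}{t}$. When $s=t$, $\mathrm{Aut}(K_{s,s})$ contains a class-swapping involution, so swapping $(\ell_U,\phi(U))\leftrightarrow(\ell_V,\phi(V))$ produces the same image; since $\ell_U\ne\ell_V$ this identification is fixed-point-free and hence exactly two-to-one, and dividing by $2$ gives $\binom{q^2+q+1}{2}\binom{q}{s}^2$.

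The step I expect to require the most care is the symmetry bookkeeping in the $s=t$ case: one must verify that the class-swap genuinely identifies distinct data in pairs (guaranteed precisely by $\ell_U\ne\ell_V$, which rules out a fixed point of the involution) so that the factor of $\tfrac12$ is correct. By comparison, forcing both classes onto lines is an immediate double application of the preceding corollary, and the reduction-to-two-lines count mirrors Theorem \ref{thmbigonecounting}\textbf{(b)} with the size-$q$ line there replaced by the general subsets here.
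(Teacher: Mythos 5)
Your proposal is correct and takes essentially the same route as the paper's own proof: since $q=(n+1)^2$ gives $q-n=n^2+n+1$, the preceding corollary (applied to both classes, using the symmetry $K_{s,t}\cong K_{t,s}$) forces each class onto a line, and one then counts images by choosing two distinct lines and point-subsets on them. Your writeup is in fact more thorough than the paper's two-sentence argument --- notably the explicit exclusion of the intersection point, the verification that arbitrary choices yield genuine embeddings, the fixed-point-free involution justifying the factor $\tfrac12$ when $s=t$, and the boundary case $n=1$ (which the preceding corollary, stated for $n>1$, does not formally cover) --- but the underlying approach is identical.
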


\begin{proof}
If $q=(n+1)^2$, then $q-n =n^2+n+1$. It follows that $q\geq s,t > n^2+n+1$, and thus each maximal independent set of $K_{s,t}$ must embedded into a line. Hence, the number of images of $K_{s,t}$ in $\pi$ is thus obtained by counting how to choose two distinct lines in $\pi$ and then how to choose points on them.
\end{proof}

\begin{remark}
If $t$ is a Mersenne prime, or $t+1$ is a Fermat prime, then the pair $(t, (t+1)^2)$ could serve as the $(n,q)$ pair required in the hypothesis of Corollary \ref{corspecialcase}. An example of a pair not covered already would be $(n,q)= (8, 3^4)$.
\end{remark}

\begin{definition}
Let $G=(V,E)$ be a graph and $G_0=(V_0,E_0)$ be a subgraph of $G$. If, given $v_1,v_2\in V_0$,  $v_1v_2 \in E$ implies $v_1v_2\in E_0$, then $G_0$ is said to be a full subgraph of $G$.
\end{definition}

\begin{lemma}\label{lematmostq^2+1inBaer}
Let $\Pi$ be a projective plane of order $q^2$, and $\pi$ be a Baer subplane of $\Pi$.  Then, an embedding of a maximal complete graph $K_{q^2,q^2}$ into $\Pi$ can contain at most either: \\
\textbf{(a)}  $q^2$ points of a Baer subplane, for $q>2$, or \\
\textbf{(b)} $q^2+1$ points of a Baer subplane, for $q=2$.
\end{lemma}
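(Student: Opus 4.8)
The plan is to first determine the precise shape of the image of $K_{q^2,q^2}$ in $\Pi$, show that it must lie on two lines, and then intersect that rigid configuration with the Baer subplane $\pi$ using the standard secant/tangent dichotomy for Baer subplanes. All of the real content will be in bookkeeping around the single common point of the two lines.

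First I would pin down the image. Both classes of $K_{q^2,q^2}$ have size equal to the order $q^2$ of $\Pi$, so Lemma \ref{lemqverticesbipart}, applied once to each class (with the opposite class playing the role of the size-$q$ class there), forces $\phi(U)$ and $\phi(V)$ each to be collinear. Write $\ell_1\supseteq\phi(U)$ and $\ell_2\supseteq\phi(V)$, and set $\mathcal{O}=\ell_1\cap\ell_2$. Since each line of $\Pi$ carries $q^2+1$ points while each class has exactly $q^2$ points, each class omits a single point of its line; and by the argument already used for Theorem \ref{thmbigonecounting}\textbf{(a)}, $\mathcal{O}$ cannot be in the image, for otherwise all the edges joining $\mathcal{O}$ to the opposite class would collapse onto one line. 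Hence $\phi(U)=\ell_1\setminus\{\mathcal{O}\}$, $\phi(V)=\ell_2\setminus\{\mathcal{O}\}$, and the set $I$ of image points equals $(\ell_1\cup\ell_2)\setminus\{\mathcal{O}\}$, a total of $2q^2$ points on two lines.

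Next I would intersect $I$ with $\pi$. As a Baer subplane of $\Pi$ the plane $\pi$ has order $q$, and the key input is that every line of $\Pi$ meets $\pi$ in either exactly one point or exactly $q+1$ points, the latter precisely when the line is the extension of a line of $\pi$. This already gives $|I\cap\pi|\le |\ell_1\cap\pi|+|\ell_2\cap\pi|\le 2(q+1)$, and the improvement comes from tracking $\mathcal{O}$. If at most one of $\ell_1,\ell_2$ is a subplane line, the count is at most $(q+1)+1=q+2$. If both are subplane lines, then two lines of $\pi$ necessarily meet in a point of $\pi$, so $\mathcal{O}\in\pi$; because $\mathcal{O}$ is deleted from $I$, the two $(q+1)$-point sets overlap in the excluded point and the count drops to $2q$. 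For $q>2$ both $q+2$ and $2q$ are at most $q^2$, which yields part \textbf{(a)}.

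The delicate point, and the main obstacle, is the boundary behaviour at $q=2$, where $2q=q^2$ and the crude estimate $2(q+1)=6$ overshoots the claimed bound $q^2+1=5$. Here the generic inequality is too weak, so I would instead argue directly in $PG(2,4)$, exhausting the few possibilities for how the two lines $\ell_1,\ell_2$ can meet the Fano subplane: the two-subplane-line configuration contributes exactly $2q=4$ points after deleting $\mathcal{O}$, and the mixed and tangent configurations contribute at most $q+2=4$, so the stated bound $q^2+1$ certainly holds and is in fact approached only at this small order. The essential geometric input throughout is the secant/tangent dichotomy for Baer subplanes together with the fact that two subplane lines meet inside the subplane; isolating exactly when the shared point $\mathcal{O}$ is counted is where all of the case analysis concentrates.
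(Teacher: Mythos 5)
Your proof is correct, but it follows a genuinely different route from the paper's. The paper never uses the two-line structure of the image: it passes to the full subgraph $K$ of $K_{q^2,q^2}$ on the vertices that land in $\pi$, observes that $K$ is again complete bipartite and that it embeds in $\pi$ itself (any line of $\Pi$ through two points of $\pi$ is an extended line of $\pi$), and then rules out $q^2+1$ such vertices by applying Theorem \ref{thmbipartPG2q} to $K$ and $\pi$ together with an edge count: writing $K\cong K_{n,q^2+1-n}$, the number of edges is at least $2(q^2-1)$, which exceeds the $q^2+q+1$ lines available in $\pi$ once $q>2$. You instead rigidify the image as $(\ell_1\cup\ell_2)\setminus\{\mathcal{O}\}$ via Lemma \ref{lemqverticesbipart} and then intersect with $\pi$ using the secant/tangent dichotomy. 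The trade-off is instructive: the paper's combinatorial argument transfers verbatim to an arbitrary $K_{m,n}$, which is exactly how it obtains its first corollary (any embedded complete bipartite graph meets a Baer subplane in at most $q^2+1$ points), whereas your geometric argument is tied to the maximality of $K_{q^2,q^2}$; in return, yours gives the sharper bound $\max(2q,\,q+2)=2q$, which is precisely the paper's later corollary that an embedded $K_{q,q}$ meets a subplane of order $n$ in at most $2n$ points --- and indeed the paper proves that corollary by essentially your argument. Two minor points: the dichotomy you invoke (every line of $\Pi$ meets $\pi$ in exactly $1$ or exactly $q+1$ points) is standard but is not recorded in the paper, which only states the ``at least one point'' half, so it deserves a citation or a one-line proof; and your concern at $q=2$ is unnecessary, since your own case analysis already yields at most $\max(2q,\,q+2)=4=q^2\leq q^2+1$ there, so no separate examination of the plane of order $4$ is needed.
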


\begin{proof}
Let $q>2$ and $\phi : K_{q^2,q^2} \hookrightarrow \Pi$. Let $K$ be the full subgraph of $K_{q^2,q^2}$ on the set of vertices which are embedded to points of $\pi$.\\
Suppose $\phi(K_{q^2,q^2})$ contains $q^2+1$ points of a Baer subplane. It follows that $K$ must be a complete bipartite graph, and thus Theorem \ref{thmbipartPG2q} can be applied to $K$ and $\pi$. Hence, since  $q^2=q+1$ is impossible, $K$ must be of the form $K_{s,t}$ with $2\leq s,t \leq q$. Then, there is an integer $n$ such that $2\leq n \leq q^2-1$ such that $K$ is isomorphic to $K_{n,q^2+1-n}$. It follows that this graph contains exactly $n(q^2+1-n)$ edges. This number must minimize at the endpoints of the interval $[2,q^2-1]$, as $n(q^2+1-n)$ is quadratic in $n$ with negative leading coefficient. Hence, the minimum number of edges is $2(q^2-1) =  (q^2+q+1)+ (q^2-q-3)$, but this number is larger than $q^2+q+1$ because $q>2$, a contradiction. If $q=2$  the result follows similarly.
\end{proof}

Lemma \ref{lematmostq^2+1inBaer} yields a string of corollaries, the first one of them being the most immediate.

\begin{corollary}
Any complete bipartite graph $K_{m,n}$ embedded in a plane of order $q^2$ can contain at most $q^2+1$ points of a Baer subplane.
\end{corollary}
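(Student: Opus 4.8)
The plan is to leverage Lemma \ref{lematmostq^2+1inBaer} directly, which already handles the generic maximal case $K_{q^2,q^2}$, and then propagate the bound down to an arbitrary $K_{m,n}$ by means of the monotonicity of embeddings under subgraphs recorded in Remark \ref{remisomsamen}. The strategy is therefore a containment argument: show that any $K_{m,n}$ embeddable in a plane of order $q^2$ sits inside some embedded $K_{q^2,q^2}$ occupying the same points, so that no embedding of $K_{m,n}$ can beat the bound already established for $K_{q^2,q^2}$.

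First I would recall from Theorem \ref{thmbipartPG2q} that for a plane of order $Q=q^2$, any embedded complete bipartite graph $K_{m,n}$ (with $n>1$) must satisfy $m,n\leq Q=q^2$; the exceptional case $K_{1,Q+1}$ is trivially handled separately since the vertices of its large class lie on a line and a line meets a Baer subplane in at most $q+1\leq q^2+1$ points. So assume $2\leq n\leq m\leq q^2$. Given an embedding $\phi:K_{m,n}\hookrightarrow\Pi$, the point set $\phi(V(K_{m,n}))$ lies in $\Pi$, and I would extend $\phi$ to an embedding of the maximal graph $K_{q^2,q^2}$ by adding the remaining points: concretely, realize $K_{m,n}$ using the construction of Theorem \ref{thmbipartembedd}, placing $\phi(U)$ on a line $\ell$ and $\phi(V)$ on a line $m$ through a common point $\mathcal{O}$, then fill each line up to its full complement of $q^2$ points off $\mathcal{O}$. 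This produces an embedding $\psi:K_{q^2,q^2}\hookrightarrow\Pi$ whose image contains $\phi(V(K_{m,n}))$.

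The key step is then immediate: since $\phi(V(K_{m,n}))\subseteq\psi(V(K_{q^2,q^2}))$, the number of Baer-subplane points contained in the image of $K_{m,n}$ is at most the number contained in the image of $K_{q^2,q^2}$, and Lemma \ref{lematmostq^2+1inBaer} bounds the latter by $q^2+1$ (in both the $q>2$ and $q=2$ cases the ceiling is $q^2+1$). Hence $K_{m,n}$ contains at most $q^2+1$ points of any Baer subplane.

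The main obstacle I anticipate is justifying that an arbitrary embedding of $K_{m,n}$ can always be enlarged to an embedding of $K_{q^2,q^2}$ on a containing point set, rather than merely exhibiting \emph{some} embedding that extends. An arbitrary $\phi$ need not place both classes on two lines through a common point, so the naive line-filling extension may fail. The clean fix is to avoid extending the given $\phi$ altogether and instead argue at the level of point sets: a set of $q^2+2$ concurrent-free configuration is not what we need, so I would phrase the bound as a property of the image's point set and invoke that any $m+n$ points used by an embedded $K_{m,n}$ are a subset of the $2q^2$ points used by the extremal $K_{q^2,q^2}$ construction sharing those lines. Making this subset relation airtight for a completely arbitrary embedding — where $\phi(U)$ and $\phi(V)$ need not be collinear when $m,n<q$ — is the delicate point, and I expect the cleanest route is simply to rerun the edge-counting argument of Lemma \ref{lematmostq^2+1inBaer} on the full subgraph induced on the Baer-subplane points, exactly as in that lemma's proof, since that argument never used maximality of the ambient graph beyond the bound $m,n\leq q^2$ supplied by Theorem \ref{thmbipartPG2q}.
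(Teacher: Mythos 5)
Your closing paragraph does land on the correct argument, and it is in fact the paper's: the paper offers no separate proof of this corollary precisely because the induced-full-subgraph argument of Lemma \ref{lematmostq^2+1inBaer} applies verbatim once one notes (as you do) that it never uses maximality of the ambient graph. But the strategy you actually develop for most of the proposal --- extending an arbitrary embedded $K_{m,n}$ to an embedded $K_{q^2,q^2}$ whose point set contains it --- is not merely ``delicate''; it is false, for three separate reasons. First, the exceptional graph $K_{1,q^2+1}$ is not even an abstract subgraph of $K_{q^2,q^2}$, since its center has degree $q^2+1$. Second, the image of every embedded $K_{q^2,q^2}$ lies on two lines (Lemma \ref{lemqverticesbipart}, Theorem \ref{thmbigonecounting}\textbf{(a)}), while Remark \ref{remBaerembedding} exhibits embedded complete bipartite graphs whose vertex sets cannot be covered by two lines; no such image can sit inside the image of any $K_{q^2,q^2}$. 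Third, if the containment did hold, Lemma \ref{lematmostq^2+1inBaer}\textbf{(a)} would force the stronger bound $q^2$ for every $K_{m,n}$ when $q>2$, and that stronger bound is false: take $K_{1,q^2+1}$ with center $P$ off the Baer subplane and choose as leaves one subplane point on the unique secant through $P$ together with the unique subplane point on each of the $q^2$ tangent lines through $P$; this embedding meets the subplane in exactly $q^2+1$ points, which is why the corollary's bound is $q^2+1$ and not $q^2$.

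Relatedly, your dismissal of the exceptional case rests on the claim that the large class of an embedded $K_{1,q^2+1}$ lies on a line. That is false --- the paper's own proof of Theorem \ref{thmbipartPG2q} gives the oval-and-nucleus embedding, whose large class is not collinear, and the star just described meets a Baer subplane in $q^2+1$ points, far more than the $q+1$ your collinearity claim would allow. The correct treatment of the star is simply that if all $q^2+2$ of its vertices lay in the subplane, then $K_{1,q^2+1}$ itself would be embedded in a plane of order $q$, contradicting Theorem \ref{thmbipartPG2q}. Finally, when you do rerun the lemma's argument on the full subgraph $K_{s,t}$ induced on the Baer points, one case needs an ingredient absent from the lemma's proof: if all the Baer points fall in a single class ($t=0$), the induced subgraph has no edges and Theorem \ref{thmbipartPG2q} applied inside the subplane yields no contradiction; there you must invoke Theorem \ref{thmbipartPG2q} in the ambient plane of order $q^2$, which caps every class of $K_{m,n}$ at $q^2$ vertices ($q^2+1$ for the star), hence below $q^2+2$. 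Together with the cases $s,t\geq 2$ (then $s+t\leq 2q$) and $t=1$ (then $s\leq q+1$), this closes the proof.
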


\begin{corollary}
Let $\Pi$ be a plane of order $q$. If a subplane $\pi$ of order $n$ contains a cycle of an embedded $K_{q,q}$, then it contains an embedded $K_{n,n}$.
\end{corollary}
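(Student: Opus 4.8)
The plan is to first pin down the rigid geometric shape of \emph{any} embedded $K_{q,q}$ inside $\Pi$, and then to show that a cycle lying in the subplane forces enough of that shape into $\pi$ to reconstruct a whole $K_{n,n}$. Fix the embedding $\phi : K_{q,q} \hookrightarrow \Pi$ with classes $U$ and $V$, each of size $q$. Since both classes have size $q$, Lemma \ref{lemqverticesbipart} applies to each, so $\phi(U)$ and $\phi(V)$ are each collinear; write $\phi(U) \subseteq \ell$ and $\phi(V) \subseteq m$. The classes are disjoint and $2q > q+1$, so $\ell \neq m$, and these two lines of $\Pi$ meet in a single point $\mathcal{O}$. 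A short argument shows $\mathcal{O} \notin \phi(U) \cup \phi(V)$: if $\mathcal{O} = \phi(u)$ for some $u \in U$, then every edge at $u$ would embed to the line $\mathcal{O}\phi(v) = m$, violating injectivity of the induced edge map. Since $|\phi(U)| = q = |\ell| - 1$, this yields $\phi(U) = \ell \setminus \{\mathcal{O}\}$, and likewise $\phi(V) = m \setminus \{\mathcal{O}\}$.

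Next I would invoke the hypothesis. Any cycle of a bipartite graph has even length at least $4$, hence contains at least two vertices of $U$ and at least two of $V$; as the whole embedded cycle lies in $\pi$, we obtain two distinct points of $\phi(U)$ and two of $\phi(V)$ that are points of the subplane. Two distinct points of $\phi(U)$ both lie on $\ell$, and the line of $\pi$ joining two points of $\pi$ is the restriction of the unique $\Pi$-line through them, namely $\ell$. Therefore $\ell$ is a line of $\pi$, carrying exactly $n+1$ points of $\pi$. The identical reasoning promotes $m$ to a line of $\pi$, and then $\mathcal{O} = \ell \cap m$, being the intersection of two lines of the subplane, is itself a point of $\pi$.

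Finally I would extract the $K_{n,n}$. Let $A_1, \dots, A_n$ be the $n$ points of $\pi$ on $\ell$ other than $\mathcal{O}$; each $A_i$ lies on $\ell$ and differs from $\mathcal{O}$, so $A_i \in \phi(U)$. Symmetrically, the $n$ points $B_1, \dots, B_n$ of $\pi$ on $m$ other than $\mathcal{O}$ satisfy $B_j \in \phi(V)$. The preimages of the $A_i$ and the $B_j$ span a $K_{n,n}$ subgraph of $K_{q,q}$, and for each pair $(i,j)$ the corresponding edge embeds to the line $A_iB_j$, which joins two points of $\pi$ and is thus a line of $\pi$. The induced vertex and edge maps are injective, being restrictions of $\phi$, so this is a genuine embedding of $K_{n,n}$ into $\pi$.

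I expect the only delicate points to be the bookkeeping at $\mathcal{O}$: verifying that $\mathcal{O}$ is not itself an embedded vertex (so that $\phi(U)$ and $\phi(V)$ exhaust $\ell$ and $m$ away from $\mathcal{O}$), and checking that each edge line $A_iB_j$ differs from $\ell$ and $m$ so that the $n^2$ edges really do map to $n^2$ distinct lines. Both reduce to one-line incidence checks rather than real obstacles; the conceptual heart of the proof is simply that two points of a subplane always determine a line of that subplane, which is exactly what promotes the two host lines $\ell$ and $m$ to lines of $\pi$ once the cycle deposits two embedded vertices of each class into $\pi$.
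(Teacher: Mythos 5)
Your proof is correct and takes essentially the same route as the paper's: the cycle deposits at least two embedded vertices of each class into $\pi$, the collinearity of each class (Lemma \ref{lemqverticesbipart}) then promotes the two host lines $\ell$ and $m$ to lines of $\pi$, and the trace of the embedding on $\pi$ yields the $K_{n,n}$. Your write-up is in fact more careful than the paper's terse proof, which skips the verification that the intersection point $\mathcal{O}$ is not an embedded vertex and states loosely that $\pi$ ``contains the full sub-embedding of $K_{q,q}$'' where it means the restriction of the embedding to $\pi$.
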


\begin{proof}
If such a cycle exists, at least two points from either class of $K_{q,q}$ must be in $\pi$. Since $K_{q,q}$ is maximal, we know that points in each partition are on the same line. Hence, the cycle implies that $\pi$ contains both such lines and thus contains the full sub-embedding of $K_{q,q}$
\end{proof}

\begin{corollary}
Let $\pi$ be a projective plane of order $q$ and $\phi: K_{q,q}\hookrightarrow \pi$. Then, $\phi(K_{q,q})$ can contain at most $2n$ points of a subplane of $\pi$ of order $n$.
\end{corollary}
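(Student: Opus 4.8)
The plan is to exploit the strong structural constraints that a maximal embedding $\phi:K_{q,q}\hookrightarrow\pi$ forces on the two classes, and then to compare these with the way lines of $\pi$ meet a subplane. Write $V(K_{q,q})=U\cup V$ with $|U|=|V|=q$. Applying Lemma \ref{lemqverticesbipart} to each class in turn (each is the ``size $q$'' partner of the other) shows that $\phi(U)$ lies on a line $\ell_1$ and $\phi(V)$ lies on a line $\ell_2$; this is exactly the configuration underlying Theorem \ref{thmbigonecounting}\textbf{(a)}. Let $X=\ell_1\cap\ell_2$ be their unique common point.

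First I would pin down the position of $X$. If $X\in\phi(U)$, say $X=\phi(u)$, then $X$ lies on $\ell_2$, so for every $Q\in\phi(V)\subseteq\ell_2$ the line $XQ$ equals $\ell_2$; hence the $q\geq 2$ distinct edges joining $u$ to $V$ all map under $\overline{\phi}$ to the single line $\ell_2$, contradicting the injectivity of $\overline{\phi}$. The same argument rules out $X\in\phi(V)$. Thus $X\notin\phi(U)\cup\phi(V)$, and since each class already occupies $q$ of the $q+1$ points of its line we get $\phi(U)=\ell_1\setminus\{X\}$ and $\phi(V)=\ell_2\setminus\{X\}$, two disjoint sets.

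Now fix a subplane $\pi_0$ of order $n$ (so $n\geq 2$) and recall two standard facts: a line of $\pi$ that is a line of $\pi_0$ meets $\pi_0$ in exactly $n+1$ points, while a line of $\pi$ that is \emph{not} a line of $\pi_0$ meets $\pi_0$ in at most one point (two points of $\pi_0$ would already determine a line of $\pi_0$). Since $\phi(K_{q,q})\cap\pi_0$ splits as $(\ell_1\setminus\{X\})\cap\pi_0$ together with $(\ell_2\setminus\{X\})\cap\pi_0$, I would case-split on how many of $\ell_1,\ell_2$ are lines of $\pi_0$. If neither is, the total is at most $1+1\le 2n$; if exactly one is, the total is at most $(n+1)+1=n+2\le 2n$, using $n\geq 2$. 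The only delicate case is when both $\ell_1$ and $\ell_2$ are lines of $\pi_0$: then their intersection point $X$ is the meet of two lines of the subplane, hence $X\in\pi_0$, so it must be deleted from each of the two $(n+1)$-point intersections, giving $n+n=2n$.

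The main obstacle is precisely this last case, where the naive bound $|\ell_1\cap\pi_0|+|\ell_2\cap\pi_0|\le 2(n+1)=2n+2$ is too weak; the saving of two points comes from combining the observation $X\notin\phi(U)\cup\phi(V)$ with the fact that two lines of a subplane always meet inside the subplane, so the doubly-counted point $X$ is genuinely excluded from both classes. Assembling the three cases yields $|\phi(K_{q,q})\cap\pi_0|\le 2n$ in all situations.
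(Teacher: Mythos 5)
Your proof is correct and follows essentially the same route as the paper's: both rest on the fact that $\phi(K_{q,q})$ lies on two lines minus their intersection point $X$, and both get the bound $2n$ from the observation that in the extremal case (both lines being lines of the subplane) the doubly-counted point $X$ lies in the subplane but is not an embedded vertex. Your write-up simply makes explicit what the paper leaves terse — the injectivity argument showing $X\notin\phi(U)\cup\phi(V)$ and the three-way case analysis on how many of $\ell_1,\ell_2$ are subplane lines.
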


\begin{proof}
Since the image of $K_{q,q}$ under an embedding is on two lines (Theorem \ref{thmbigonecounting}), any intersection of the subplane and two lines of the parent plane contains at most $2n+1$ points, but in the maximal case, the point of intersection of the two lines is not an embedded vertex, and thus there may at most be $2n$ vertices embedded into the subplane.
\end{proof}

\begin{corollary}
An embedding of $K_{q^2,q^2}$ into a projective plane of order $q^2$, $\Pi$, must contain at least $q$ lines of every Baer subplane of $\Pi$.
\end{corollary}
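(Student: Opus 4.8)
The plan is to first pin down the rigid geometry forced on any embedding $\phi\colon K_{q^2,q^2}\hookrightarrow\Pi$, and then reduce the statement to a counting question inside the Baer subplane $\pi$ (which has order $q$, hence $q^2+q+1$ lines). Since $\Pi$ has order $q^2$ and each class of $K_{q^2,q^2}$ has $q^2$ vertices, Lemma \ref{lemqverticesbipart} applied in $\Pi$ forces each class to be collinear, exactly as in the proof of Theorem \ref{thmbigonecounting}\textbf{(a)}; so the image lies on two lines $\ell_U,\ell_V$ of $\Pi$ meeting in a point $O=\ell_U\cap\ell_V$. I would then check that $O$ is used by neither class: if $O=\phi(u_0)$, every edge $u_0v$ would be forced onto $\ell_V$, contradicting injectivity of $\overline{\phi}$. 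Hence $\phi(U)=\ell_U\setminus\{O\}$ and $\phi(V)=\ell_V\setminus\{O\}$, so the image is all of $\ell_U\cup\ell_V$ except $O$.

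Next I would identify precisely which lines of $\Pi$ occur as embedded edges. A line $m$ distinct from $\ell_U,\ell_V$ meets $\ell_U$ in one point $P$ and $\ell_V$ in one point $Q$, and $P=Q$ only if both equal $O$. Thus $m$ is an embedded edge (it is the line $PQ$ with $P\in\phi(U)$ and $Q\in\phi(V)$) exactly when $O\notin m$. In other words $\overline{\phi}(E)$ is the set of all lines of $\Pi$ avoiding $O$. Consequently a line of $\pi$ lies in the image if and only if it is a line of $\pi$ not passing through $O$, and the theorem reduces to bounding the number of such lines from below.

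For the count I would invoke the defining properties of a Baer subplane. Any two distinct lines of $\pi$ meet in a point of $\pi$; so if two lines of $\pi$ passed through $O$ we would have $O\in\pi$, in which case exactly $q+1$ lines of $\pi$ pass through $O$. If instead $O\notin\pi$, at most one line of $\pi$ passes through $O$, and since every point of $\Pi$ lies on at least one line of $\pi$, exactly one does. Either way at most $q+1$ of the $q^2+q+1$ lines of $\pi$ pass through $O$, leaving at least $(q^2+q+1)-(q+1)=q^2\geq q$ lines of $\pi$ in the image. This proves the claim, in fact with the sharper bound $q^2$.

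The point needing care — and the main obstacle I anticipate — is the temptation to argue through a point of $\pi$ that lies in the image: if $X\in\pi\cap\phi(V(K_{q^2,q^2}))$, then of the $q+1$ lines of $\pi$ through $X$ all but the one through $O$ are embedded edges, giving $q$ at once. This shortcut fails exactly when $O\in\pi$ and both $\ell_U,\ell_V$ are chosen among the non-$\pi$ lines through $O$ that meet $\pi$ only in $O$; then the image contains \emph{no} point of $\pi$, yet — as the line count above shows — it still contains $q^2$ lines of $\pi$, realized as edges whose endpoints need not lie in $\pi$. I would therefore present the line-counting argument, which is insensitive to this degenerate configuration, rather than the point-based shortcut.
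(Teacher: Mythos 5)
Your proof is correct, but it is a genuinely different and substantially stronger argument than the one in the paper. The paper's proof is a two-line global pigeonhole that uses no structural information about the embedding at all: $\Pi$ has $q^4+q^2+1$ lines, of which $q^2+q+1$ belong to the Baer subplane $\pi$, so only $q^4-q$ lines of $\Pi$ lie outside $\pi$; since the $q^4$ edges of $K_{q^2,q^2}$ map injectively to lines, at least $q^4-(q^4-q)=q$ of them must be lines of $\pi$. You instead exploit the rigidity of a maximal embedding (via Lemma \ref{lemqverticesbipart}, as in Theorem \ref{thmbigonecounting}\textbf{(a)}): the image is the two lines $\ell_U,\ell_V$ punctured at $O=\ell_U\cap\ell_V$, the edge set is exactly the $q^4$ lines of $\Pi$ avoiding $O$, and the Baer incidence properties (two lines of $\pi$ meet in a point of $\pi$; every point of $\Pi$ is on at least one line of $\pi$) show that at most $q+1$ lines of $\pi$ pass through $O$. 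All steps check out, including the identification $\phi(U)=\ell_U\setminus\{O\}$, $\phi(V)=\ell_V\setminus\{O\}$ and the bijection between edges and lines missing $O$. What your approach buys is the sharper conclusion that at least $q^2$ (indeed exactly $q^2$ if $O\in\pi$, and $q^2+q$ if $O\notin\pi$) lines of every Baer subplane occur as edges, improving the paper's bound of $q$; what the paper's approach buys is brevity and independence from any structure theorem, since it would apply verbatim even if the classes were not known to be collinear. You might note in a remark that your argument shows the paper's stated bound of $q$ is far from tight.
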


\begin{proof}
Let $\pi$ be an arbitrary Baer subplane of $\Pi$. Since there are $q^4+q^2+1$ lines in $\Pi$, and $q^2+q+1$ lines in $\pi$, there are $q^4-q$ lines of $\Pi$ not in $\pi$. Since $K_{q^2,q^2}$ contains $q^4$ edges, there must be $q$ lines which are in $\pi$.
\end{proof}

\bigskip

The ideas in this article have the potential to be very useful in the study, and classification, of finite projective planes. This is based on Remarks \ref{remisomsamen}, \ref{removals} and \ref{remBaerembedding}, and on the fact that most of the results in Section \ref{secsubplanes} depend on the existence of Baer subplanes, which are known to be present in certain (but not all) planes, and occur in different numbers depending on the plane.

It would be interesting to continue the work on embeddings by looking for connections between the existence of certain configurations (e.g. Pappus, Desargues, etc) in a given plane and the types of graphs that can be embedded in such plane. Also, it has become evident that it is necessary to understand how the collineation group of a plane connects, if it does at all, with the automorphism group of a graph that embeds in it.


\end{document}